\definecolor{darkblue}{RGB}{0,0,160}
\newcommand{\bm}[1]{\mbox{\boldmath{$#1$}}}
\newcommand{\bdgr}[1]{\bm{#1}}
\newcommand{\sbdgr}[1]{\scalebox{.7}{\bm{#1}}}
\newtheorem{thm}{Theorem}[section]
\newtheorem{lemma}[thm]{Lemma}
\newtheorem{cor}[thm]{Corollary}
\theoremstyle{definition}
 \numberwithin{equation}{section}
\newcommand{\Mathematica}{\textsc{Mathematica}\xspace}
\begin{document}
\title[$ D $-optimal Designs for Poisson Regression with Synergetic Interaction]{$ D $-optimal Designs for Poisson Regression with Synergetic Interaction Effect}

\author[F.~Freise]{Fritjof Freise}
\address[]{Department of Biometry, Epidemiology and Information Processing\\
	University of Veterinary Medicine Hannover\\Bünteweg 2,
	30559 Hannover\\Germany}
\email{fritjof.freise@tiho-hannover.de}
%\urladdr{\url{}}
\author[U.~Graßhoff]{Ulrike Graßhoff}
\address[]{School of Business and Economics\\ Humboldt-University Berlin\\ Unter den Linden~6, 10099 Berlin\\ Germany}
\email{grasshou@hu-berlin.de}
\author[F.~Röttger]{Frank Röttger}
\address[]{MPI MiS Leipzig\\Inselstraße 22\\04103 Leipzig\\Germany}
\email{frank.rottger@mis.mpg.de}
\urladdr{\url{https://sites.google.com/view/roettger}}
\thanks{Corresponding author: Frank Röttger}
\author[R.~Schwabe]{Rainer Schwabe}
\address[]{Institute for Mathematical Stochastics\\ Otto-von-Guericke-University
	Magdeburg\\ Universit\"ats\-platz 2, 39106 Magdeburg\\Germany}
\email{rainer.schwabe@ovgu.de}
\urladdr{\url{http://www.imst3.ovgu.de}}

\begin{abstract}
	We characterize $D$-optimal designs in the two-dimensional Poisson regression model with synergetic interaction and provide an explicit proof.
	The proof is based on the idea of reparameterization of the design region in terms of contours of constant intensity.
	This approach leads to a substantial reduction of complexity as properties of the sensitivity can be treated along and across the contours separately.
	Furthermore, some extensions of this result to higher dimensions are presented.
	
	\smallskip
	\noindent \textbf{Keywords.} $D$-optimal design, Poisson regression, Interaction, Synergy effect, Minimally supported design
\end{abstract}

\maketitle	

\section{Introduction}

Count data plays an important role in medical and pharmaceutical development, marketing, or psychological research. 
For example, Vives, Losilla, and Rodrigo \cite{VLR2006} performed a review on articles published in psychological journals in the period from 2002 to 2006.
There they found out that a substantial part of these articles dealt with count data for which the mean was quite low (for details we refer to the discussion in Graßhoff et al.\ \cite{GHS2020}).
In these situations, standard linear models are not applicable because they cannot account for the inherent heteroscedasticity.
Instead Poisson regression models are often more appropriate to describe such data. 
As an early source in psychological research we may refer to the Rasch Poisson counts model introduced by Rasch \cite{Rasc60} in 1960 to predict person ability in an item response setup.

The Poisson regression model can be considered as a particular Generalized Linear Model (see McCullagh and Nelder \cite{MN1989}). 
For the analysis of count data in the Poisson regression model there is a variety of literature (see e.\,g.\ Cameron and Trivedi \cite{CT2013}) and the statistical analysis is implemented in main standard statistical software packages (cf.\ ``glm" in R,``GENLIN" in SPSS, ``proc genmod" in SAS),
But only few work has been done to design such experiments.
Ford, Torsney and Wu derived optimal designs for the one-dimensional Poisson regression model in their pioneering paper on canonical transformations  \cite{FTW92}.
Wang et al.\ \cite{WMSY06} obtained numerical solutions for optimal designs in two-dimensional Poisson regression models both for the main effects only (additive) model as well as for the model with interaction term.
For the main effects only model the optimality of their design was proven analytically by Russell et al.\ \cite{RWL09} even for larger dimensions.
Rodr\'{\i}guez{-}Torreblanca and Rodr\'{\i}guez{-}D\'{\i}az \cite{RR07} extended the result by Ford et al.\ for one-dimensional Poisson regression to overdispersed data specified by a negative binomial regression model, and Schmidt and Schwabe \cite{SS17} generalized the result by Russell et al.\ for higher-dimensional Poisson regression to a much broader class of additive regression models.
Graßhoff et al.\ \cite{GHS2020} gave a complete characterization of optimal designs in an ANOVA-type setting for Poisson regression with binary predictors and Kahle et al.\ \cite{KOS2015} indicate, how interactions could be incorporated in this particular situation.

In this paper, we find $ D $-optimal designs for the two-dimensional Poisson regression model with synergetic interaction as before considered numerically by Wang et al.~\cite{WMSY06}. 
We show the $ D $-optimality by reparametrizing the design space via hyperbolic coordinates, such that the inequalities in the Kiefer--Wolfowitz equivalence theorem only need to be checked on the boundary and the diagonal of the design region. This allows us to find an analytical proof for the $ D $-optimality of the proposed design.
Furthermore, we extend this result in various ways to higher-dimensional Poisson regression. First, we find $ D $-optimal designs for first-order and second-order interactions, given that the prespecified interaction parameters are zero. 
Second, we present a $ D $-optimal design for Poisson regression with first-order synergetic interaction where the design space is restricted to the union of the two-dimensional faces of the positive orthant.

The paper is organized as follows.
In the next section we introduce the basic notations for Poisson regression models and specify the corresponding concepts of information and design in Section~\ref{s:Information}.
Results for two-dimensional Poisson regression with interaction are established in Section~\ref{s:main}. 
In Section~\ref{s:k-dim}, we present some extensions to higher-dimensional Poisson regression models.
Further extensions are discussed in Section~\ref{s:discussion}.
Technical proofs have been deferred to an Appendix. 
We note that most of the inequalities there have first been detected by using the computer algebra system \Mathematica~\cite{Mathematica}, but analytical proofs are provided in the Appendix for the readers' convenience. 

\section{Model Specification}

We consider the Poisson regression model where observations $Y$ are Poisson distributed with intensity $E(Y)=\lambda(\mathbf{x})$ which depends on one or more explanatory variables 
$\mathbf{x}=(x_1,...,x_k)$ in terms of a generalized linear model.
In particular, we assume a log-link which relates the mean $\lambda(\mathbf{x})$ to a linear component $\mathbf{f}(\mathbf{x})^\top \bdgr{\beta}$ by
$\lambda(\mathbf{x}) = \exp(\mathbf{f}(\mathbf{x})^\top \bdgr{\beta})$,
where $\mathbf{f}(\mathbf{x})=(f_1(\mathbf{x}),...,f_p(\mathbf{x}))^\top$ is a vector of $p$ known regression functions and $\bdgr{\beta}$ is a $p$-dimensional vector of unknown parameters. 
For example, if $\mathbf{x}=x$ is one-dimensional ($k=1$), then simple Poisson regression is given by $\mathbf{f}(x)=(1,x)^\top$ with $p=2$, $\bdgr{\beta}=(\beta_0,\beta_1)^\top$ and intensity $\lambda(x)=\exp(\beta_0+\beta_1 x)$.
For two explanatory variables $\mathbf{x}=(x_1,x_2)$ ($k=2$) multiple Poisson regression without interaction is given by $\mathbf{f}(\mathbf{x})=(1,x_1,x_2)^\top$ with $p=3$, $\bdgr{\beta}=(\beta_0,\beta_1,\beta_2)^\top$ and intensity $\lambda(\mathbf{x})=\exp(\beta_0+\beta_1 x_1+\beta_2 x_2)$.

In what follows we will focus on the two-dimensional multiple regression ($\mathbf{x}=(x_1,x_2)$, $k=2$) with interaction term, where $p=4$, $\mathbf{f}(\mathbf{x})=(1,x_1,x_2,x_1 x_2)^\top$, $\bdgr{\beta}=(\beta_0,\beta_1,\beta_2,\beta_{12})^\top$ and intensity 
 \begin{equation}
 \label{ModW}
\lambda(\mathbf{x})=\exp(\beta_0+\beta_1 x_1+\beta_2 x_2+\beta_{12} x_1 x_2).
\end{equation}
Here $\beta_0$ is an intercept term such that the mean is $\exp(\beta_0)$ when the explanatory variables are equal to $0$. The quantities $\beta_1$ and $\beta_2$ denote the direct effects of each single explanatory variable, and $\beta_{12}$ describes the amount of the interaction effect when both explanatory variables are active (non-zero).

Typically the explanatory variables describe non-negative quantities ($x_1, x_2 \geq 0$) like doses of some chemical or pharmaceutical agents --- or difficulties of tasks in item response experiments in psychology. 
In particular, in the latter case the expected number of counts (correct answers) decreases with increasing difficulty. 
Then it is reasonable to assume that the direct effects are negative ($\beta_1,\beta_2 < 0$), and that the interaction effect tends into the same direction if present ($\beta_{12} \leq 0$).
In the case that $\beta_{12} < 0$ this will be called a synergy effect because it describes a strengthening of the effect if both components are used simultaneously.

\section{Information and Design}\label{s:Information}

In experimental situations the setting $\mathbf{x}$ of the explanatory variables may be chosen by the experimenter from some experimental region $\mathcal{X}$. 
As the explanatory variables describe non-negative quantities, and if there are no further restrictions on these quantities, it is natural to assume that the design region $\mathcal{X}$ is the non-negative half-axis $[0,\infty)$ or the closure of quadrant I in the Cartesian plane, $[0,\infty)^2$, in one- or two-dimensional Poisson regression, respectively.

To measure the contribution of an observation $Y$ at setting $\mathbf{x}$ the corresponding information can be used: 
With the log-link the Poisson regression model  constitutes a generalized linear model with canonical link \cite{MN1989}. Furthermore for Poisson distributed observations $Y$ the variance and the mean coincide, $\text{Var}(Y)=\mathbb{E}(Y)=\lambda(\mathbf{x})$.
Hence, according to \cite{Atkinson2014} the elemental (Fisher) information for an observation $Y$ at a setting $\mathbf{x}$ is a $p \times p$ matrix given by
\[
\mathbf{M}_{\sbdgr{\beta}}(\mathbf{x}) = \lambda(\mathbf{x})\mathbf{f}(\mathbf{x}) \mathbf{f}(\mathbf{x})^{\top}.
\]
Note that on the right-hand side the intensity $\lambda(\mathbf{x})=\exp(\mathbf{f}(\mathbf{x})^\top \bdgr{\beta})$ depends on the linear component $\mathbf{f}(\mathbf{x})^\top \bdgr{\beta}$ and, hence, on the parameter vector $\bdgr{\beta}$.
Consequently also the information depends on $\bdgr{\beta}$ as indicated by the notation $\mathbf{M}_{\sbdgr{\beta}}$.

For $N$ independent observations $Y_1,...,Y_N$ at settings $\mathbf{x}_1,...,\mathbf{x}_N$ the joint Fisher information matrix is obtained as the sum of the elemental information matrices,
\[
\mathbf{M}_{\sbdgr{\beta}}(\mathbf{x}_1,...,\mathbf{x}_N) = \sum_{i=1}^N \lambda(\mathbf{x}_i)\mathbf{f}(\mathbf{x}_i) \mathbf{f}(\mathbf{x}_i)^{\top}.
\]
The collection $\mathbf{x}_1,...,\mathbf{x}_N$ of settings is called an exact design, and the aim of design optimization is to choose these settings such that the statistical analysis is improved. 
The quality of a design can be measured in terms of the information matrix because its inverse is proportional to the asymptotic covariance matrix of the maximum-likelihood estimator of $\bdgr{\beta}$, see Fahrmeir and Kaufmann \cite{FK85}.
Hence, larger information means higher precision. 
However, matrices are not comparable in general.
Therefore one has to confine oneself to some real valued criterion function applied to the information matrix.
In accordance with the literature we will use the most popular $D$-criterion which aims at maximizing the determinant of the information matrix.
This criterion has nice analytical properties and can be interpreted in terms of minimization of the volume of the asymptotic confidence ellipsoid for $\bdgr{\beta}$ based on the maximum-likelihood estimator.
The optimal design will depend on the parameter vector $\bdgr{\beta}$ and is, hence, only locally optimal. 

Finding an optimal exact design is a discrete optimization problem which is often too hard for analytical solutions.
Therefore we adopt the concept of approximate designs in the spirit of Kiefer \cite{Kief74}. 
An approximate design $\xi$ is defined as a collection $\mathbf{x}_0,...,\mathbf{x}_{n-1}$ of $n$ mutually distinct settings in the design region $\mathcal{X}$ with corresponding weights $ w_0,...,w_{n-1} \geq 0$ satisfying $\sum_{i=0}^{n-1} w_i = 1$.
Then an exact design can be written as an approximate design, where $\mathbf{x}_0,...,\mathbf{x}_{n-1}$ are the mutually distinct settings in the exact design with corresponding numbers $N_0,...,N_{n-1}$ of replications, $\sum_{i=0}^{n-1} N_i = N$, and frequencies $w_i = N_i / N$, $i=0,...,{n-1}$.
However, in an approximate design the weights are relaxed from multiples of $1 / N$ to non-negative real numbers which allow for continuous optimization.

For an approximate design $\xi$ the information matrix is defined as
\[
\mathbf{M}_{\sbdgr{\beta}}(\xi) =  \sum_{i=0}^{n-1} w_i \lambda(\mathbf{x}_i) \mathbf{f}(\mathbf{x}_i) \mathbf{f}(\mathbf{x}_i)^\top ,
\]
which therefore coincides with the standardized (per observation) information matrix $\frac{1}{N}\mathbf{M}_{\sbdgr{\beta}}(\mathbf{x}_1,...,\mathbf{x}_N)$.
An approximate design $\xi^*$ will be called locally $D$-optimal at $\bdgr{\beta}$ if it maximizes the determinant of the information matrix $\mathbf{M}_{\sbdgr{\beta}}(\xi)$.

\section{Optimal Designs}\label{s:main}

We start with quoting results from the literature for one-dimensional and two-dimensional regression without interaction:
In the case of one-dimensional Poisson regression the design $\xi_{\beta_1}^*$ which assigns equal weights $w_0^*=w_1^*=1/2$ to the two settings $x_0^*=0$ and $x_1^*=2/|\beta_1|$ is locally $D$-optimal at $\bdgr{\beta}$ on $\mathcal{X}=[0,\infty)$ for $\beta_1<0$, see Rodríguez-Torreblanca and Rodríguez-Díaz \cite{RR07}.

In the case of two-dimensional Poisson regression without interaction the design $\xi_{\beta_1,\beta_2}^*$ which assigns equal weights $w_0^*=w_1^*=w_2^*=1/3$ to the three settings $\mathbf{x}_0^*=(0,0)$, $\mathbf{x}_1^*=(2/|\beta_1|,0)$, and $\mathbf{x}_2^*=(0,2/|\beta_2|)$ is locally $D$-optimal at $\bdgr{\beta}$ on $\mathcal{X}=[0,\infty)^2$ for $\beta_1,\beta_2<0$, see Russell et al.~\cite{RWL09}. 
Note that the optimal coordinates on the axes coincide with the optimal values in the one-dimensional case, see Schmidt and Schwabe \cite{SS17}. 

In both cases the optimal design is minimally supported, i.e.~the number $n$ of support points of the design is equal to the number $p$ of parameters. 
It is well-known that for $D$-optimal minimally supported designs the optimal weights are all equal, $w_i^*=1/p$, see Silvey \cite{Silv80}.
Such optimal designs are attractive as they can be realized as exact designs when the sample size $N$ is a multiple of the number of parameters $p$.

Further note that these optimal designs always include the setting $x_0=0$ or $\mathbf{x}_0=(0,0)$, respectively, where the intensity $\lambda$ attains its largest value.

The above findings coincide with the numerical results obtained by Wang et al.~\cite{WMSY06} who also numerically found minimally supported $D$-optimal designs for the case of two-dimensional Poisson regression with interaction.
In what follows we will give explicit formulae for these designs and establish rigorous analytical proofs of their optimality.

We start with the special situation of vanishing interaction ($\beta_{12}=0$).
In this case standard methods of factorization can be applied to establish the optimal design, see Schwabe \cite{Schw96}, section 4.

\begin{thm}
\label{thm:prod}
	  If $\beta_1,\beta_2<0$ and $\beta_{12}=0$, then the design $\xi_{\beta_1}^*\otimes\xi_{\beta_2}^*$ which assigns equal weights $w_0^*=w_1^*=w_2^*=w_3^*=1/4$ to the four settings $\mathbf{x}_0^*=(0,0)$, $\mathbf{x}_1^*=(2/|\beta_1|,0)$, $\mathbf{x}_2^*=(0,2/|\beta_2|)$, and $\mathbf{x}_3^*=(2/|\beta_1|,2/|\beta_2|)$ is locally $D$-optimal at $\bdgr{\beta}$ on $\mathcal{X}=[0,\infty)^2$.
\end{thm}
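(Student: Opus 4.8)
The plan is to exploit the product structure that the model acquires when $\beta_{12}=0$, following the factorization approach of Schwabe~\cite{Schw96}. The key observation is that the regression vector factorizes as a Kronecker product, $\mathbf{f}(\mathbf{x}) = \mathbf{f}_1(x_1) \otimes \mathbf{f}_2(x_2)$ with $\mathbf{f}_1(x_1)=(1,x_1)^\top$ and $\mathbf{f}_2(x_2)=(1,x_2)^\top$ (up to a reordering of the coordinates of $\mathbf{f}$, which is an orthogonal change of basis and hence irrelevant for the determinant), and that the intensity factorizes as $\lambda(\mathbf{x}) = \exp(\beta_0+\beta_1 x_1+\beta_2 x_2) = \lambda_1(x_1)\,\lambda_2(x_2)$ with $\lambda_1(x_1)=\exp(\beta_0+\beta_1 x_1)$ and $\lambda_2(x_2)=\exp(\beta_2 x_2)$. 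Each marginal is precisely the one-dimensional Poisson regression model, for which $\xi_{\beta_i}^*$ is the locally $D$-optimal design on $[0,\infty)$ quoted above; note that this one-dimensional optimum does not depend on its intercept, so the asymmetric placement of $\beta_0$ in the factorization is immaterial.

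First I would verify that the information matrix inherits this product structure, i.e.\ that $\mathbf{M}_{\sbdgr{\beta}}(\xi_1^*\otimes\xi_2^*) = \mathbf{M}_1(\xi_1^*)\otimes\mathbf{M}_2(\xi_2^*)$, where $\mathbf{M}_i(\xi_i)=\sum_j w_{i,j}\,\lambda_i(x_{i,j})\,\mathbf{f}_i(x_{i,j})\mathbf{f}_i(x_{i,j})^\top$ denotes the marginal information matrix. This is immediate from the bilinearity of the Kronecker product together with the factorizations of both $\lambda$ and $\mathbf{f}$. At the proposed designs each marginal matrix is a nonsingular $2\times 2$ matrix, so the product is nonsingular.

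The decisive step is to evaluate the sensitivity function of the product design. Writing $\psi(\mathbf{x},\xi)=\lambda(\mathbf{x})\,\mathbf{f}(\mathbf{x})^\top\mathbf{M}_{\sbdgr{\beta}}(\xi)^{-1}\mathbf{f}(\mathbf{x})$ and using $(\mathbf{A}\otimes\mathbf{B})^{-1}=\mathbf{A}^{-1}\otimes\mathbf{B}^{-1}$ together with $(\mathbf{a}\otimes\mathbf{b})^\top(\mathbf{C}\otimes\mathbf{D})(\mathbf{a}\otimes\mathbf{b})=(\mathbf{a}^\top\mathbf{C}\mathbf{a})(\mathbf{b}^\top\mathbf{D}\mathbf{b})$, the sensitivity factorizes:
\[
\psi(\mathbf{x},\xi_1^*\otimes\xi_2^*)=\psi_1(x_1,\xi_1^*)\,\psi_2(x_2,\xi_2^*),
\]
where $\psi_i(x_i,\xi_i^*)=\lambda_i(x_i)\,\mathbf{f}_i(x_i)^\top\mathbf{M}_i(\xi_i^*)^{-1}\mathbf{f}_i(x_i)$ is the marginal sensitivity. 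By the Kiefer--Wolfowitz equivalence theorem applied to each one-dimensional model, $0\le\psi_i(x_i,\xi_i^*)\le p_i=2$ on $[0,\infty)$, with equality exactly at the support points $x_i\in\{0,\,2/|\beta_i|\}$. As both factors are nonnegative and bounded by $2$, their product satisfies $\psi(\mathbf{x},\xi_1^*\otimes\xi_2^*)\le 4 = p$ for all $\mathbf{x}\in[0,\infty)^2$, with equality precisely at the four product support points $\mathbf{x}_0^*,\dots,\mathbf{x}_3^*$. The Kiefer--Wolfowitz equivalence theorem for the full four-parameter model then yields the claimed $D$-optimality; the product construction automatically assigns weight $\tfrac12\cdot\tfrac12=\tfrac14$ to each of these points, as stated.

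I do not expect a genuine obstacle here: the entire argument hinges on the Kronecker factorization, which reduces everything to the already-quoted one-dimensional result. The only points requiring care are bookkeeping ones --- confirming the coordinate reordering in $\mathbf{f}$, checking that the one-dimensional sensitivity function is invariant under the choice of intercept (so that $\lambda_2$ carrying intercept $0$ causes no difficulty), and verifying the Kronecker-product identities for the inverse and the quadratic form. The contrast with the genuinely synergetic case $\beta_{12}<0$, where no such factorization is available, is exactly what motivates the hyperbolic-coordinate reparametrization developed later in Section~\ref{s:main}.
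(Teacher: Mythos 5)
Your proposal is correct and follows essentially the same route as the paper: both rest on the Kronecker/product factorization of $\mathbf{f}$, $\lambda$, and the design region into the two one-dimensional marginal Poisson models, combined with the known marginal optimal designs. The only difference is that the paper invokes Theorem~4.2 of Schwabe~\cite{Schw96} as a black box, whereas you unpack its proof (Kronecker structure of the information matrix, factorization of the sensitivity function, and the equivalence theorem), which is a valid and complete substitute.
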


\begin{proof}
	 The regression function $\mathbf{f}(\mathbf{x})=(1,x_1,x_2,x_1 x_2)^\top$ is the Kronecker product of the regression functions $\mathbf{f}_1(x_1)=(1,x_1)^\top$ and $\mathbf{f}_1(x_2)=(1,x_2)^\top$ in the corresponding marginal one-dimensional Poisson regression models, and the design region $\mathcal{X}$ is the Cartesian product of the marginal design regions $\mathcal{X}_1=\mathcal{X}_2=[0,\infty)$. 
	 Also the intensity $\lambda(\mathbf{x})=\exp(\beta_0+\beta_1 x_1+\beta_2 x_2)$ factorizes into the marginal intensities $\lambda_1(x_1)=\exp(\beta_0+\beta_1 x_1)$ and $\lambda_2(x_2)=\exp(\beta_2 x_2)$ for the marginal parameters $\bdgr{\beta}_1=(\beta_0,\beta_1)^\top$ and $\bdgr{\beta}_2=(0,\beta_2)^\top$, respectively.
	  As mentioned before the designs $\xi_{\beta_j}^*$ which assign equal weights $1/2$ to the settings $x_{j0}=0$ and $x_{j1}=2/|\beta_j|$ are locally $D$-optimal at $\bdgr{\beta}_j$ on $\mathcal{X}_j$, $j=1,2$.
	  Then the product type design $\xi_{\beta_1}^*\otimes\xi_{\beta_2}^*$ which is defined as the measure theoretic product of the marginals is locally $D$-optimal at $\bdgr{\beta}$ by an application of Theorem~4.2 in \cite{Schw96}.
\end{proof}

In contrast to the result of Theorem~\ref{thm:prod} the intensity fails to factorize in the case of a non-vanishing interaction ($\beta_{12} \neq 0$). Thus a different approach has to be chosen.
As a prerequisite we mention that in the above cases the optimal designs can be derived from those for standard parameter values $\beta_0=0$ and $\beta_1=-1$ in one dimension or $\beta_1=\beta_2=-1$ in two dimensions by canonical transformations, see Ford et al.~\cite{FTW92}, or, more generally, by equivariance considerations, see Radloff and Schwabe \cite{RS2016}.
We will adopt this approach also to the two-dimensional Poisson regression model with interaction and consider the case $\beta_0=0$ and $\beta_1=\beta_2=-1$ first.
There the interaction effect remains a free parameter, and we denote the strength of the synergy effect by $\rho=-\beta_{12} \geq 0$. 

\subsection{Standardized Case}\label{s:standard}

Throughout this subsection we assume the standardized situation with $\bdgr{\beta}=(0,-1,-1,-\rho)^\top$ for some $\rho \geq 0$.
Motivated by Theorem~\ref{thm:prod} and the numerical results in Wang et al.~ \cite{WMSY06} we consider a class $\Xi_0$ of minimally supported designs as potential candidates for being optimal.
In the class $\Xi_0$ the designs have one setting at the origin $\mathbf{x}_0=(0,0)$, where the intensity is highest, one setting $\mathbf{x}_1=(x_1,0)$ and $\mathbf{x}_2=(0,x_2)$ on each of the bounding axes of the design region as for the optimal design in the model without interaction, and an additional setting $\mathbf{x}_3=(t,t)$ on the diagonal of the design region, where the effects of the two components are equal. 
The following result is due to Könner~\cite{koenner}.

\begin{lemma}\label{l:design}
	Let $t=(\sqrt{1+8\rho}-1)/(2\rho)$ for $\rho > 0$ and  $t=2$ for $\rho = 0$.
	Then the design $\xi_t$ which assigns equal weights $1/4$ to $\mathbf{x}_0=(0,0)$, $\mathbf{x}_1=(2,0)$, $\mathbf{x}_2=(0,2)$, and $\mathbf{x}_3=(t,t)$ is	locally $D$-optimal within the class $\Xi_0$. 
\end{lemma}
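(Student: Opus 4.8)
The plan is to exploit that every design in the class $\Xi_0$ is minimally supported, so that the determinant of the information matrix factorizes and the optimization decouples. Writing a generic member of $\Xi_0$ with axis coordinates $a>0$ and $b>0$, diagonal coordinate $t>0$, and weights $w_0,\dots,w_3$, I would collect the regression vectors $\mathbf{f}(\mathbf{x}_i)$ as the rows of a $4\times 4$ matrix $\mathbf{F}$, so that $\mathbf{M}_{\sbdgr{\beta}}(\xi)=\mathbf{F}^\top \mathbf{C}\,\mathbf{F}$ with $\mathbf{C}=\mathrm{diag}\big(w_i\,\lambda(\mathbf{x}_i)\big)$. This yields the product formula $\det \mathbf{M}_{\sbdgr{\beta}}(\xi)=(\det \mathbf{F})^2\prod_{i=0}^{3} w_i\,\lambda(\mathbf{x}_i)$, which separates the roles of the weights, the geometry, and the intensities.

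First I would dispose of the weights: since $\prod_i w_i$ is maximized under $\sum_i w_i=1$ at $w_i=1/4$ by the inequality of arithmetic and geometric means, and this factor does not depend on the point locations, the optimal design in $\Xi_0$ necessarily carries equal weights $1/4$. Next I would compute $\det \mathbf{F}$. Because $\mathbf{f}(0,0)=(1,0,0,0)^\top$, $\mathbf{f}(a,0)=(1,a,0,0)^\top$, and $\mathbf{f}(0,b)=(1,0,b,0)^\top$ have triangular structure while only $\mathbf{f}(t,t)=(1,t,t,t^2)^\top$ populates the last coordinate, one obtains $\det \mathbf{F}=a\,b\,t^2$. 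Combining this with $\lambda(0,0)=1$, $\lambda(a,0)=e^{-a}$, $\lambda(0,b)=e^{-b}$, and $\lambda(t,t)=e^{-2t-\rho t^2}$, the objective becomes proportional to
\[
a^2 b^2 t^4 \exp\!\big(-a-b-2t-\rho t^2\big),
\]
which factors as $g(a)\,g(b)\,k(t)$ with $g(s)=s^2 e^{-s}$ and $k(t)=t^4 e^{-2t-\rho t^2}$.

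The remaining step is to maximize the three factors independently. Taking logarithms, $\log g(s)=2\log s-s$ is strictly concave with unique maximizer $s=2$, yielding $\mathbf{x}_1=(2,0)$ and $\mathbf{x}_2=(0,2)$. Likewise $\log k(t)=4\log t-2t-\rho t^2$ is strictly concave, and its stationarity condition $4/t-2-2\rho t=0$ rearranges to the quadratic $\rho t^2+t-2=0$, whose unique positive root is $t=(\sqrt{1+8\rho}-1)/(2\rho)$ for $\rho>0$; the limiting case $\rho=0$ reduces to $t-2=0$, that is $t=2$. This reproduces exactly the design in the statement.

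I expect essentially no serious obstacle here, as the separable structure reduces everything to one-dimensional strictly concave maximizations. The only points demanding a little care are to note that each factor vanishes as its argument tends to $0^+$ or to $+\infty$, so the maximum is attained at an interior stationary point and the first-order conditions are sufficient, and to confirm that these stationary points are genuine maxima, which follows from the strict concavity of the logarithms. I would also emphasize that this lemma establishes optimality only \emph{within} the restricted class $\Xi_0$; the global $D$-optimality over all designs on $[0,\infty)^2$ is a separate and harder claim requiring the Kiefer--Wolfowitz equivalence theorem, which is the subject of the subsequent development.
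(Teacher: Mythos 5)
Your proof is correct and follows essentially the same route as the paper's: factorizing $\det\mathbf{M}(\xi)=\det(\mathbf{W})\det(\mathbf{\Lambda})\det(\mathbf{F})^2$ for the minimally supported design, reading off $\det\mathbf{F}$ from the triangular structure, equalizing the weights, and maximizing $s^2e^{-s}$ and $t^4e^{-2t-\rho t^2}$ separately. The only cosmetic difference is that you justify the equal weights via AM--GM and the interior maxima via log-concavity, whereas the paper cites the standard result for minimally supported designs and states the maximizers directly.
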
 

Note that $t=2$ for $\rho = 0$ is in accordance with the optimal product-type design in Theorem~\ref{thm:prod}, $t$ is continuously decreasing in $\rho$, and $t$ tends to $0$ when the strength of synergy $\rho$ gets arbitrarily large.
Figure~\ref{fig:t} shows the value of $ t $ in dependence on $ \rho $.
\begin{figure}
	\includegraphics[scale=1]{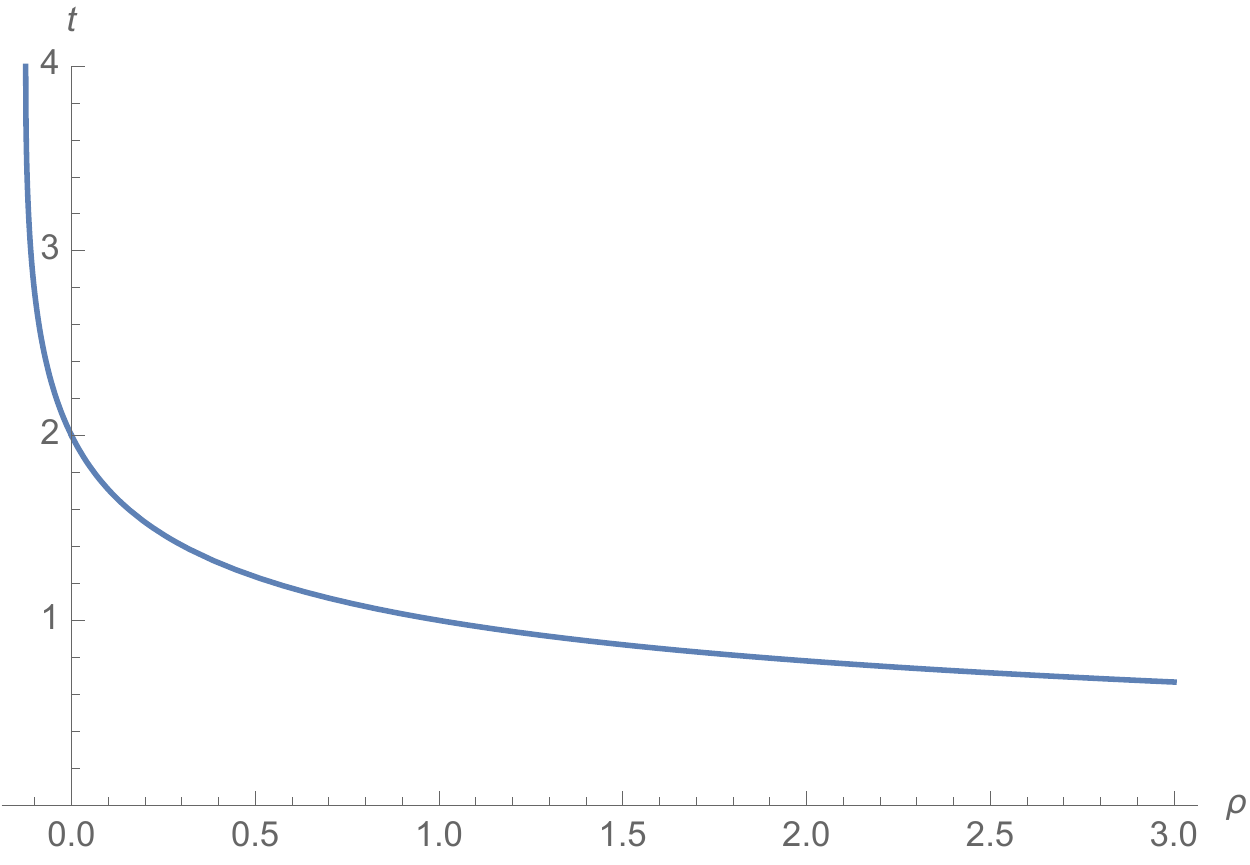}
	\caption{Value of optimal $ t $ in Lemma~\ref{l:design} for $ -1/8\le \rho \le 3 $}\label{fig:t}
\end{figure}

To establish that $\xi_t$ is locally $D$-optimal within the class of all designs on $\mathcal{X}$ we will make use of the Kiefer--Wolfowitz equivalence theorem \cite{kieferwolfowitz1960} in its extended version incorporating intensities, see Fedorov \cite{Fedorov1972}.
For this we introduce the sensitivity function
$
\psi(\mathbf{x};\xi) = \lambda(\mathbf{x})  \mathbf{f}(\mathbf{x})^{\top} \mathbf{M}(\xi)^{-1} \mathbf{f}(\mathbf{x}) ,
$
where we suppress the dependence on $\bdgr{\beta}$ in the notation.
Then by the equivalence theorem a design $\xi^*$ is (locally) $D$-optimal if (and only if) the sensitivity function $\psi(\mathbf{x};\xi^*)$ does not exceed the number $p$ of parameters uniformly on the design region $\mathcal{X}$.
Equivalently we may consider the deduced sensitivity function
 \begin{align*}
 d(\mathbf{x};\xi) 
 % & = (\psi(\mathbf{x};\xi) - p)/(p\lambda(\mathbf{x})) \\
& = \mathbf{f}(\mathbf{x})^{\top} \mathbf{M}(\xi)^{-1} \mathbf{f}(\mathbf{x})/p - 1/\lambda(\mathbf{x}) 
 \end{align*}
as $ \lambda(\mathbf{x})>0 $.
Then $\xi_t$ is $D$-optimal if $d(\mathbf{x};\xi_t) \leq 0$ for all $\mathbf{x}\in\mathcal{X}$.
To establish this condition we need some preparatory results on the shape of the (deduced) sensitivity function. 
Figure~\ref{fig:deducedsensitivity} shows $ d(\mathbf{x};\xi_t)  $ for $ t=2 $ for $ \rho=0 $, i.e.~for the standardized setting in Theorem~\ref{thm:prod}.
\begin{figure}
	\includegraphics[scale=1]{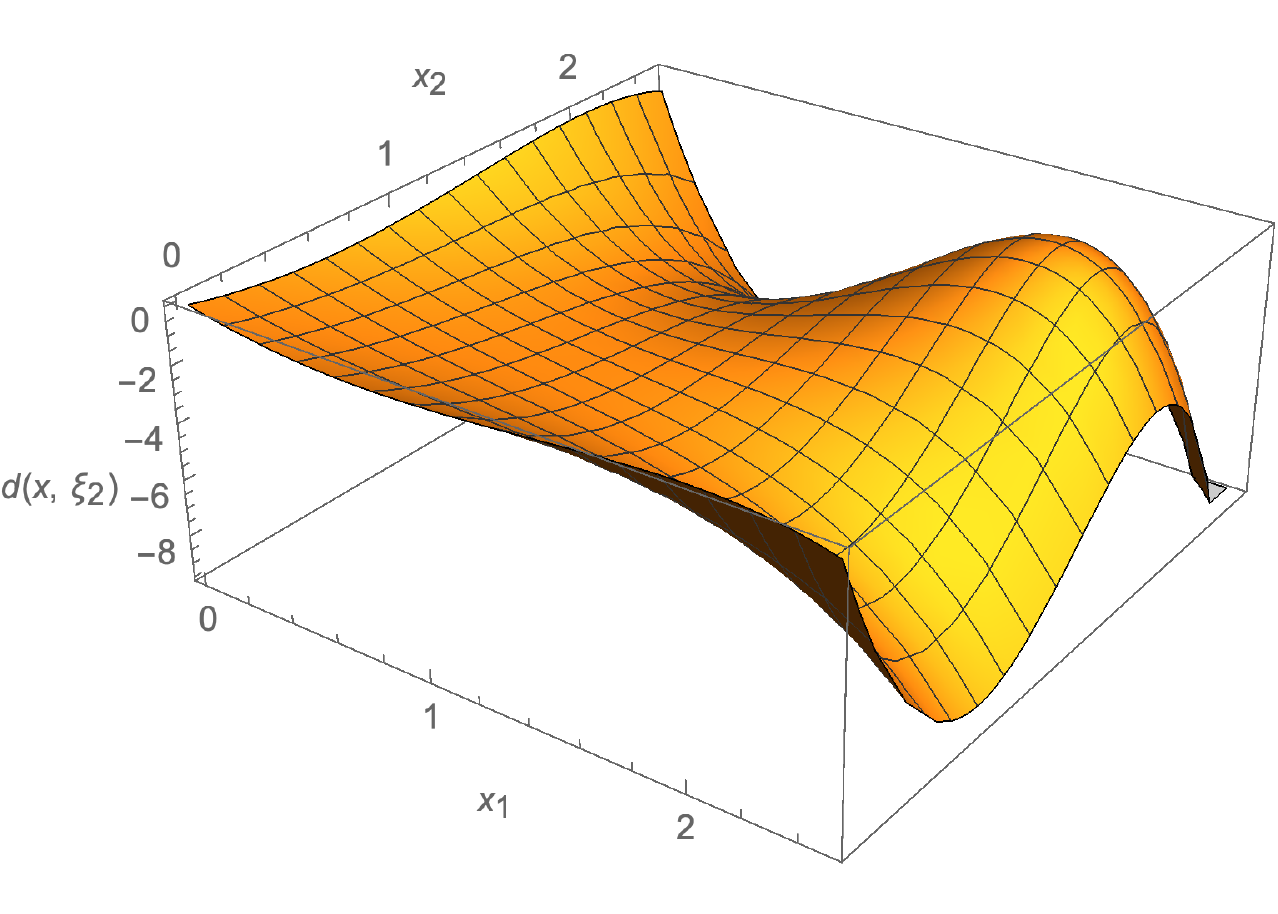}
	\caption{Deduced sensitivity function for $ t=2 $ ($ \rho=0 $)}\label{fig:deducedsensitivity}
\end{figure}

\begin{lemma}
\label{l:hyperbolicpath}
	If $\xi$ is invariant under permutation of $x_1$ and $x_2$, then $d(\mathbf{x};\xi)$ attains its maximum on the boundary or on the diagonal of $\mathcal{X}$.
\end{lemma}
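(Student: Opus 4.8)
The plan is to exploit the foliation of $\mathcal{X}=[0,\infty)^2$ by the contours of constant intensity. In the standardized case $\lambda(\mathbf{x})=\exp(-x_1-x_2-\rho x_1x_2)$, so I would use $c=-\log\lambda(\mathbf{x})=x_1+x_2+\rho x_1x_2$ as the coordinate labelling the contour (the ``across'' direction) and $m=x_1x_2$ as the coordinate running along it. Each contour $\{c=\mathrm{const}\}$ is a hyperbolic arc (a line segment when $\rho=0$); its two endpoints on the axes correspond to $m=0$, namely the boundary points $(c,0)$ and $(0,c)$, while its midpoint on the diagonal corresponds to the maximal value $m=m_{\max}$ with $x_1=x_2$. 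Since $1/\lambda(\mathbf{x})=e^{c}$ is constant along a contour, maximizing $d(\mathbf{x};\xi)$ along it is equivalent to maximizing the quadratic form $q(\mathbf{x})=\mathbf{f}(\mathbf{x})^{\top}\mathbf{M}(\xi)^{-1}\mathbf{f}(\mathbf{x})$, and the lemma will follow once I show that on every contour this maximum is attained at an endpoint $m\in\{0,m_{\max}\}$.

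Next I would bring in the permutation symmetry. Writing $P$ for the $4\times4$ matrix that swaps the second and third entries of $\mathbf{f}$, invariance of $\xi$ under $x_1\leftrightarrow x_2$ together with $\lambda(x_2,x_1)=\lambda(x_1,x_2)$ yields $P\,\mathbf{M}(\xi)\,P=\mathbf{M}(\xi)$, so $A:=\mathbf{M}(\xi)^{-1}$ commutes with $P$. Hence $A$ block-diagonalizes along the eigenspaces of $P$: the symmetric span of $(1,0,0,0)^\top$, $(0,1,1,0)^\top$, $(0,0,0,1)^\top$ and the antisymmetric span of $(0,1,-1,0)^\top$. Decomposing $\mathbf{f}(\mathbf{x})$ accordingly, the cross terms drop out and
\[
q(\mathbf{x}) = q_{\mathrm{sym}}\!\left(1,\tfrac{x_1+x_2}{2},\,x_1x_2\right) + a_{-}\,\frac{(x_1-x_2)^2}{4},
\]
where $q_{\mathrm{sym}}$ is the positive definite quadratic form from the symmetric block of $A$ and $a_{-}>0$ is the scalar value of $A$ on the one-dimensional antisymmetric block.

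The decisive step is to restrict this expression to a contour and read off its dependence on $m$. On $\{c=\mathrm{const}\}$ one has $x_1+x_2=c-\rho m$ and $(x_1-x_2)^2=(c-\rho m)^2-4m$, so the arguments of $q_{\mathrm{sym}}$ are affine in $m$ while the antisymmetric term is quadratic in $m$; consequently $q$, and hence $d=q/p-e^{c}$, is a quadratic polynomial in $m$ on $[0,m_{\max}]$. Its leading coefficient is the sum of $\mathbf{a}_1^{\top}\tilde A\,\mathbf{a}_1$, coming from the direction $\mathbf{a}_1=(0,-\rho/2,1)^\top$ along which the symmetric arguments vary, and $a_{-}\rho^2/4$ from the antisymmetric term. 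Both summands are nonnegative because $A$ is positive definite, and the first is strictly positive; thus $d$ is strictly convex in $m$ along every contour. A convex function on $[0,m_{\max}]$ attains its maximum at an endpoint, i.e.\ at $m=0$ (a point on an axis, hence on the boundary) or at $m=m_{\max}$ (the diagonal point). Since the contours foliate $\mathcal{X}$, this shows the global maximum of $d(\cdot\,;\xi)$ lies on the boundary or on the diagonal.

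The one genuinely delicate point is the choice of the along-contour coordinate: it is the substitution $m=x_1x_2$ (rather than, say, $x_1-x_2$) that simultaneously turns both the symmetric and the antisymmetric contributions into honest quadratics in a single variable, after which the sign of the leading coefficient is forced by positive definiteness of $\mathbf{M}(\xi)^{-1}$. I would expect the verification of the endpoint correspondence ($m=0\leftrightarrow$ axes, $m=m_{\max}\leftrightarrow$ diagonal) and the $\rho=0$ limit — where the contours degenerate to line segments and the $\rho^2$ term vanishes, the symmetric block alone still furnishing a positive leading coefficient — to be the only routine items requiring care.
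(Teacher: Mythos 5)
Your proof is correct and follows essentially the same route as the paper: foliate $\mathcal{X}$ by contours of constant intensity, use the permutation symmetry to reduce the restricted sensitivity to a quadratic in a single along-contour variable, and get positivity of the leading coefficient from positive definiteness of $\mathbf{M}(\xi)^{-1}$. Your coordinate $m=x_1x_2$ is an affine function of the paper's $\cosh(u)$ on each contour, and your leading coefficient $\mathbf{a}_1^{\top}\tilde{A}\,\mathbf{a}_1 + a_{-}\rho^2/4$ is exactly the paper's $\mathbf{a}^{\top}\mathbf{M}(\xi)^{-1}\mathbf{a}$ with $\mathbf{a}=(0,-\rho,0,1)^{\top}$ split into its symmetric and antisymmetric parts, so the two arguments coincide up to a positive rescaling.
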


\begin{lemma}
\label{l:boundary}
	$d((x,0);\xi_{t}) = d((0,x);\xi_{t}) \leq 0$ for all $x \geq 0$.
\end{lemma}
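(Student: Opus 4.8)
The plan is to restrict the deduced sensitivity function to the boundary axis $x_2=0$, where the interaction term vanishes, and reduce the claim to a one-dimensional comparison between an exponential and a quadratic. Since $\mathbf{f}((x,0))=(1,x,0,0)^\top$ is affine in $x$, the quantity $q(x):=\mathbf{f}((x,0))^\top\mathbf{M}(\xi_t)^{-1}\mathbf{f}((x,0))$ is a polynomial of degree at most two, and with $\lambda((x,0))=e^{-x}$ in the standardized model the assertion $d((x,0);\xi_t)\le 0$ is equivalent to
\[
\phi(x):=4e^{x}-q(x)\ge 0,\qquad x\ge 0 .
\]
The invariance of $\xi_t$, $\mathbf{f}$ and $\lambda$ under interchanging the two coordinates gives $d((0,x);\xi_t)=d((x,0);\xi_t)$ at once, so it suffices to treat $\phi$.

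First I would determine $q$ explicitly. Only the entries $(\mathbf{M}(\xi_t)^{-1})_{00}$, $(\mathbf{M}(\xi_t)^{-1})_{01}$ and $(\mathbf{M}(\xi_t)^{-1})_{11}$ enter $q$, and evaluating them by cofactor expansion --- using that $\det\mathbf{M}(\xi_t)$ factors as $\det(F)^2\det(\Lambda)$ for the matrix $F$ of regression vectors and the diagonal intensity matrix $\Lambda$ --- I expect the dependence on $t$ (and hence on $\rho$) to cancel, leaving the universal quadratic $q(x)=(1+e^{2})x^{2}-4x+4$. From this explicit form the three identities $\phi(0)=0$, $\phi(2)=0$ and $\phi'(2)=0$ are immediate. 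The first two are no accident: $\mathbf{x}_0=(0,0)$ and $\mathbf{x}_1=(2,0)$ are support points of the minimally supported equal-weight design $\xi_t$, where $\psi(\mathbf{x}_i;\xi_t)=p=4$; the tangency $\phi'(2)=0$ expresses that the axis support point sits at the optimal location and will be the crucial ingredient below.

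The decisive step is then a zero-counting argument for the transcendental function $\phi$. Since $\phi'''(x)=4e^{x}>0$ never vanishes, repeated application of Rolle's theorem shows that any function of the form $4e^{x}-(\text{quadratic})$ has at most three real zeros counted with multiplicity. The simple zero at $x=0$ together with the double zero at $x=2$ already exhaust this budget, so $\phi$ has no further zeros and does not change sign across $x=2$. Because $\phi(x)\to+\infty$ as $x\to\infty$, it follows that $\phi>0$ on $(2,\infty)$ and hence $\phi\ge 0$ on all of $[0,\infty)$, with equality only at the two support points. This yields $\psi((x,0);\xi_t)\le 4$ and therefore $d((x,0);\xi_t)\le 0$.

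The main obstacle is this last transcendental inequality rather than the (short) linear algebra: $\phi$ is neither monotone nor convex on $[0,\infty)$, so crude bounds fail, and in particular a convex function vanishing at $0$ and $2$ would be negative in between. The argument only closes because $x=2$ is a genuine double zero --- this is exactly where the stationarity $\phi'(2)=0$ is indispensable --- so I would take care to establish that tangency cleanly before invoking the Chebyshev-type bound on the number of zeros of $e^{x}$ against a quadratic.
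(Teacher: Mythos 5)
Your proof is correct, and the computation you defer does go through: writing $\mathbf{M}(\xi_t)^{-1}=\mathbf{F}^{-1}(\mathbf{W}\mathbf{\Lambda})^{-1}(\mathbf{F}^{-1})^{\top}$ one finds $(\mathbf{F}^{-1})^{\top}\mathbf{f}((x,0))=(1-x/2,\;x/2,\;0,\;0)^{\top}$, so the support points $\mathbf{x}_2$ and $\mathbf{x}_3$ --- which carry all of the $t$- and $\rho$-dependence --- contribute nothing to $q(x)$, and indeed $q(x)=4(1-x/2)^2+4e^{2}(x/2)^2=(1+e^{2})x^{2}-4x+4$; your subsequent zero-counting argument ($\phi'''=4e^{x}>0$ caps the number of zeros counted with multiplicity at three, which are exhausted by the simple zero at $0$, since $\phi'(0)=8\neq 0$, and the double zero at $2$, while $\phi\to\infty$ and the absence of further zeros fix the sign) is sound. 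Your route differs from the paper's only in how the resulting one-dimensional inequality is settled. The paper observes that the vanishing of those two components means $d((x,0);\xi_t)$ coincides with the deduced sensitivity $d_1(x;\xi_{-1}^{*})$ of the locally $D$-optimal design in the one-dimensional marginal Poisson model, and then simply invokes the equivalence theorem together with the known optimality of $\xi_{-1}^{*}$ due to Rodr\'{\i}guez-Torreblanca and Rodr\'{\i}guez-D\'{\i}az, so no inequality has to be proved at all. You make the same reduction concrete and then supply a self-contained elementary proof of $4e^{x}\geq(1+e^{2})x^{2}-4x+4$; this buys independence from the cited one-dimensional result (in effect reproving it via the tangency $\phi(2)=\phi'(2)=0$ at the axial support point plus the Chebyshev-type bound on zeros of an exponential against a quadratic), at the cost of a longer argument. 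The one point to firm up is the phrase ``I expect the dependence on $t$ to cancel'': that cancellation is the crux of the lemma and should be verified explicitly, which the factorization above does in one line.
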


\begin{lemma}
\label{l:diagonal}
	$d((x,x);\xi_{t}) \leq 0$ for all $x \geq 0$.
\end{lemma}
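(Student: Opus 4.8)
The plan is to exploit that $\xi_t$ is minimally supported, so its four regression vectors $\mathbf{f}(\mathbf{x}_0),\dots,\mathbf{f}(\mathbf{x}_3)$ form a basis and the sensitivity can be read off from Lagrange interpolation. Writing $\ell_i$ for the unique function in the span of $1,x_1,x_2,x_1x_2$ with $\ell_i(\mathbf{x}_j)=\delta_{ij}$, and using $w_i=1/4$, one obtains $\mathbf{f}(\mathbf{x})^{\top}\mathbf{M}(\xi_t)^{-1}\mathbf{f}(\mathbf{x})=4\sum_{i=0}^{3}\ell_i(\mathbf{x})^2/\lambda(\mathbf{x}_i)$, hence $d(\mathbf{x};\xi_t)=\sum_{i=0}^{3}\ell_i(\mathbf{x})^2/\lambda(\mathbf{x}_i)-1/\lambda(\mathbf{x})$. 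First I would compute the four $\ell_i$ explicitly and restrict them to the diagonal $x_1=x_2=x$, where $\ell_1=\ell_2$ by the permutation symmetry; this yields $\ell_0=1-x+\frac{t-1}{t^2}x^2$, $\ell_1=\frac{x(t-x)}{2t}$ and $\ell_3=\frac{x^2}{t^2}$, while the support intensities are $\lambda(\mathbf{x}_0)=1$, $\lambda(\mathbf{x}_1)=\lambda(\mathbf{x}_2)=e^{-2}$ and $\lambda(\mathbf{x}_3)=e^{-(2t+\rho t^2)}$.

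Next I would use the defining relation for $t$ in Lemma~\ref{l:design}, which is equivalent to $\rho t^2+t=2$, to simplify $2t+\rho t^2=t+2$. Inserting everything reduces the claim to the single-variable transcendental inequality
\[
P(x):=\ell_0(x)^2+2e^2\,\ell_1(x)^2+e^{\,t+2}\,\ell_3(x)^2\ \le\ e^{\,2x+\rho x^2}\qquad(x\ge 0),
\]
where $P$ is an explicit quartic. A direct check confirms $d((0,0);\xi_t)=d((t,t);\xi_t)=0$, as must hold at support points; moreover the map $x\mapsto d((x,x);\xi_t)$ has a critical point at $x=t$ (tangency at the interior support point on the diagonal), whereas at the corner its one-sided derivative into the region is strictly negative. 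These facts fix the qualitative shape of $d$ along the diagonal.

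The crux is to prove $P(x)\le e^{2x+\rho x^2}$ on $[0,\infty)$. I would study $g(x):=e^{2x+\rho x^2}-P(x)$, which satisfies $g(0)=g(t)=0$, $g'(t)=0$ (so $t$ is at least a double root), $g'(0)>0$, and $g(x)\to+\infty$ since the exponential dominates the quartic $P$ as $x\to\infty$. Equivalently one may normalize and show $H(x):=e^{-(2x+\rho x^2)}P(x)\le 1$, with $H(0)=H(t)=1$, $H(x)\to 0$, $H'(0)<0$ and $H'(t)=0$; here the sign of $H'$ is governed by the degree-five polynomial $Q(x)=P'(x)-(2+2\rho x)P(x)$, of which $t$ is a root. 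The remaining task is to control the zeros of $Q$ (equivalently the critical points of $g$) so as to rule out any excursion of $g$ below zero, most cleanly by splitting the analysis into $0\le x\le t$ and $x\ge t$.

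The main obstacle is exactly this last step. It is not a single inequality but a one-parameter family of transcendental inequalities, since $t$ (equivalently $\rho=(2-t)/t^2$) ranges over $(0,2]$ and the balance between the quartic $P$ and the exponential changes shape with $\rho$. I expect a clean, self-contained argument to rest on elementary but delicate estimates --- bounding $e^{2x+\rho x^2}$ from below by a polynomial that already dominates $P$ on each subinterval, or a careful root count for $Q$ --- of precisely the kind that the authors defer to the Appendix.
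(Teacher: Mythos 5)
Your setup is correct and, after the substitution $q=x/t$, reproduces exactly the paper's reduction (\ref{EQUI2}): the Lagrange functions $\ell_i$ you compute are precisely the entries of $(\mathbf{F}^{-1})^{\top}\mathbf{f}(\mathbf{x})$ used in the Appendix, your identity $2t+\rho t^2=t+2$ is the same simplification the paper obtains from $\rho=(2-t)/t^2$, and your target inequality $P(x)\le e^{2x+\rho x^2}$ is identical to $h_0(q,t)-h_1(q,t)\le 0$ there. The boundary facts you record ($g(0)=g(t)=0$, $g'(t)=0$, $g'(0)>0$, $g(x)\to+\infty$) are also correct.

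The gap is that the argument stops exactly where the real work begins. Knowing the values and derivatives of $g$ at $x=0$ and $x=t$ and its growth at infinity does not preclude $g$ from dipping below zero in between or beyond these points; excluding that, uniformly over the whole one-parameter family $t\in(0,2]$, \emph{is} the lemma, and you explicitly defer it. Neither the root count for the quintic $Q$ nor the polynomial minorant of the exponential is carried out, and neither is routine: the paper's proof requires a genuinely two-piece construction, inserting a separating function $h_2(q,t)$ equal to $1$ for $q\le 3/5$ and to $e^{t+2}(q-1)^2q^2$ for $q>3/5$, and then establishing $h_0\le h_2$ and $h_2\le h_1$ separately, the latter resting on the uniform exponent bound $q(2t+(2-t)q)-(t+2)\ge 4(q-1)$ of (\ref{eq:exponentlowbound}) plus several explicit derivative estimates. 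Without some substitute for this case analysis, your proposal is an accurate and well-organized reformulation of the problem rather than a proof of it.
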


Note that $\xi_t$ is invariant with respect to the permutation of $x_1$ and $x_2$.
Then, combining Lemmas~\ref{l:hyperbolicpath} to \ref{l:diagonal}, we obtain $d(\mathbf{x};\xi_{t}) \leq 0$ for all $\mathbf{x} \in \mathcal{X}$ which establishes the $D$-optimality of $\xi_t$ in view of the equivalence theorem.

\begin{thm}\label{t:mainresult}
	 In the two-dimensional Poisson regression model with interaction the design $\xi_t$ is	locally $D$-optimal at $\bdgr{\beta}=(0,-1,-1,-\rho)^\top$ on $\mathcal{X}=[0,\infty)^2$ which assigns equal weights $1/4$ to the $4$ settings $\mathbf{x}_0=(0,0)$, $\mathbf{x}_1=(2,0)$, $\mathbf{x}_2=(0,2)$, and $\mathbf{x}_3=(t,t)$, where $t=(\sqrt{1+8\rho}-1)/(2\rho)$ for $\rho > 0$ and  $t=2$ for $\rho = 0$. 
\end{thm}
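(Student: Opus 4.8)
The plan is to use the Kiefer--Wolfowitz equivalence theorem exactly in the form recalled above: at the standardized parameter $\bdgr{\beta}=(0,-1,-1,-\rho)^\top$ the design $\xi_t$ is locally $D$-optimal if and only if the deduced sensitivity function satisfies $d(\mathbf{x};\xi_t)\le 0$ for every $\mathbf{x}\in\mathcal{X}=[0,\infty)^2$. Since Lemmas~\ref{l:hyperbolicpath}--\ref{l:diagonal} already carry the analytic weight, the proof of the theorem itself is an assembly argument in three short steps.

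First I would verify that $\xi_t$ is invariant under the transposition $(x_1,x_2)\mapsto(x_2,x_1)$: its support $\{(0,0),(2,0),(0,2),(t,t)\}$ is mapped onto itself and all weights equal $1/4$, so the permuted design coincides with $\xi_t$. Because $\mathbf{f}$ and $\lambda$ are themselves symmetric in $x_1,x_2$ at these parameter values, the information matrix $\mathbf{M}(\xi_t)$ commutes with the coordinate permutation and $d(\cdot;\xi_t)$ inherits the symmetry, so the hypothesis of Lemma~\ref{l:hyperbolicpath} is met.

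Second, Lemma~\ref{l:hyperbolicpath} reduces the global inequality to the two one-dimensional problems of checking $d\le 0$ on the boundary axes and on the diagonal $\{(x,x):x\ge 0\}$. These are precisely Lemmas~\ref{l:boundary} and~\ref{l:diagonal}; combining them gives $d(\mathbf{x};\xi_t)\le 0$ throughout $\mathcal{X}$, and the equivalence theorem then yields local $D$-optimality, which completes the proof.

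The genuine obstacle is not in the theorem but in the lemmas it rests on, above all Lemma~\ref{l:hyperbolicpath}. The idea flagged in the abstract is to reparametrize the quadrant by the level curves of the intensity, the hyperbolas $x_1+x_2+\rho\, x_1 x_2 = c$, together with a coordinate transversal to them. Along such a contour $\lambda$ is constant, so the term $1/\lambda(\mathbf{x})$ in $d$ is fixed and maximizing $d$ amounts to maximizing the quadratic form $\mathbf{f}(\mathbf{x})^\top\mathbf{M}(\xi_t)^{-1}\mathbf{f}(\mathbf{x})$ along the contour; the ``across'' direction then has to be handled separately, and the two analyses together should force the maximum of $d$ onto the boundary or the diagonal. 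Once this reduction is granted, the remaining work is the explicit verification of the two univariate inequalities in Lemmas~\ref{l:boundary} and~\ref{l:diagonal}, where one computes the relevant quadratic form from $\mathbf{M}(\xi_t)^{-1}$ and must show nonpositivity of a one-variable function for all $x\ge 0$ and all $\rho\ge 0$ --- the kind of polynomial--exponential inequality that the introduction notes was first detected with \Mathematica.
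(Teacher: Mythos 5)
Your proposal is correct and follows essentially the same route as the paper: note the permutation invariance of $\xi_t$, invoke Lemma~\ref{l:hyperbolicpath} to reduce the check of $d(\mathbf{x};\xi_t)\le 0$ to the boundary and the diagonal, apply Lemmas~\ref{l:boundary} and \ref{l:diagonal}, and conclude by the equivalence theorem. Your sketch of the contour reparametrization behind Lemma~\ref{l:hyperbolicpath} also matches the paper's hyperbolic-coordinate argument.
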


\subsection{General case}
\label{s:general}

For the general situation of decreasing intensities ($\beta_1,\beta_2 < 0$) and a synergy effect ($\beta_{12} < 0$) the optimal design can be obtained by simultaneous scaling of the settings $\mathbf{x} = (x_1,x_2) \to \tilde{\mathbf{x}} = (x_1/|\beta_1|,x_2/|\beta_2|)$ and of the parameters $\bdgr{\beta} = (0,-1,-1,-\rho)^\top \to \tilde{\bdgr{\beta}} = (0,\beta_1,\beta_2,-\rho\beta_1\beta_2)^\top$ by equivariance, see Radloff and Schwabe \cite{RS2016}.
This simultaneous scaling leaves the linear component and, hence, the intensity unchanged, $\mathbf{f}(\tilde{\mathbf{x}})^\top \tilde{\bdgr{\beta}} =\mathbf{f}(\mathbf{x})^\top \bdgr{\beta}$.
If the scaling of $\mathbf{x}$ is applied to the settings in $\xi_t$ of Theorem~\ref{t:mainresult}, then the resulting rescaled design will be locally $D$-optimal at $\tilde{\bdgr{\beta}}$ on $\mathcal{X}$ as the design region is invariant with respect to scaling.
Furthermore, the design optimization is not affected by the value $\beta_0$ of the intercept term because this term contributes to the intensity and, hence, to the information matrix only by a multiplicative factor,
$\lambda(\mathbf{x}) = \exp(\beta_0)\exp(\beta_1 x_1 + \beta_2 x_2 + \beta_{12} x_1 x_2)$.
We thus obtain the following result from Theorem~\ref{t:mainresult}.

\begin{thm}
\label{thm:general}
	  Assume the two-dimensional Poisson regression model with interaction and  $\bdgr{\beta}=(\beta_0,\beta_1,\beta_2,\beta_{12})^\top$ with $\beta_1,\beta_2 < 0$ and $\beta_{12} \leq 0$.
	  Let $\rho = - \beta_{12}/(\beta_1\beta_2)$, $t=(\sqrt{1+8\rho}-1)/(2\rho)$ for $\beta_{12} < 0$ and  $t=2$ for $ \beta_{12} = 0$.
	 Then the design which assigns equal weights $1/4$ to the $4$ settings  $\mathbf{x}_0=(0,0)$, $\mathbf{x}_1=(2/|\beta_1|,0)$, $\mathbf{x}_2=(0,2/|\beta_2|)$, and $\mathbf{x}_3=(t/|\beta_1|,t/|\beta_2|)$ is locally $D$-optimal at $\bdgr{\beta}$ on $\mathcal{X}=[0,\infty)^2$.  
\end{thm}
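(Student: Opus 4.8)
The plan is to deduce the statement directly from the standardized result in Theorem~\ref{t:mainresult} by an equivariance argument in the spirit of Radloff and Schwabe \cite{RS2016}, so that no new optimality computation is needed. First I would dispose of the intercept. Since $\lambda(\mathbf{x}) = \exp(\beta_0)\exp(\beta_1 x_1 + \beta_2 x_2 + \beta_{12} x_1 x_2)$, the factor $\exp(\beta_0)$ multiplies every elemental information matrix and hence the whole matrix $\mathbf{M}_{\sbdgr{\beta}}(\xi)$. Its $4\times 4$ determinant is thereby scaled by $\exp(4\beta_0)$, a positive constant independent of $\xi$, so the maximizer of $\det\mathbf{M}_{\sbdgr{\beta}}(\xi)$ does not depend on $\beta_0$ and I may assume $\beta_0 = 0$.

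Next I would introduce the scaling map $g(\mathbf{x}) = (x_1/|\beta_1|,\,x_2/|\beta_2|)$, a bijection of $\mathcal{X}=[0,\infty)^2$ onto itself since $|\beta_1|,|\beta_2|>0$. Setting $\mathbf{Q}=\mathrm{diag}(1,\,1/|\beta_1|,\,1/|\beta_2|,\,1/(|\beta_1||\beta_2|))$, the regression function transforms linearly, $\mathbf{f}(g(\mathbf{x}))=\mathbf{Q}\,\mathbf{f}(\mathbf{x})$, which is the structural feature making the model equivariant. I would then verify that the intensity is preserved: at the transformed parameter $\tilde{\bdgr{\beta}}=(0,\beta_1,\beta_2,-\rho\beta_1\beta_2)^\top$, using $\beta_j/|\beta_j|=-1$ and $\beta_1\beta_2/(|\beta_1||\beta_2|)=1$,
\[
\exp\!\big(\mathbf{f}(g(\mathbf{x}))^\top\tilde{\bdgr{\beta}}\big) = \exp(-x_1 - x_2 - \rho\,x_1 x_2) = \lambda(\mathbf{x}),
\]
the standardized intensity at $\bdgr{\beta}=(0,-1,-1,-\rho)^\top$. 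The choice $\rho=-\beta_{12}/(\beta_1\beta_2)$ is exactly what makes $-\rho\beta_1\beta_2=\beta_{12}$, so $\tilde{\bdgr{\beta}}$ is the target parameter; moreover $\beta_{12}\le 0$ and $\beta_1\beta_2>0$ give $\rho\ge 0$, which is precisely the hypothesis under which Theorem~\ref{t:mainresult} applies.

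Combining these two facts, for any design $\xi$ with image $\xi^{g}$ under $g$ (same weights, support points $g(\mathbf{x}_i)$) the information matrices satisfy $\mathbf{M}_{\tilde{\sbdgr{\beta}}}(\xi^{g}) = \mathbf{Q}\,\mathbf{M}_{\sbdgr{\beta}}(\xi)\,\mathbf{Q}^\top$, whence $\det\mathbf{M}_{\tilde{\sbdgr{\beta}}}(\xi^{g}) = (\det\mathbf{Q})^2\det\mathbf{M}_{\sbdgr{\beta}}(\xi)$. Because $g$ is a bijection of $\mathcal{X}$ and $(\det\mathbf{Q})^2$ is a positive constant, $\xi^{g}$ is $D$-optimal at $\tilde{\bdgr{\beta}}$ if and only if $\xi$ is $D$-optimal at $\bdgr{\beta}$. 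Pushing the standardized optimum $\xi_t$ of Theorem~\ref{t:mainresult} through $g$ sends its support points to $(0,0)$, $(2/|\beta_1|,0)$, $(0,2/|\beta_2|)$, and $(t/|\beta_1|,t/|\beta_2|)$ with equal weights $1/4$, which is exactly the claimed design; the intercept reduction of the first step then extends optimality to arbitrary $\beta_0$.

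I expect no genuine obstacle, as this is a faithful transport of the standardized theorem rather than a fresh optimization. The only step demanding care is the sign bookkeeping in the intensity invariance, namely checking that the two negative direct effects and the interaction term combine to reproduce $-x_1-x_2-\rho x_1 x_2$ under $g$, together with confirming that $\rho=-\beta_{12}/(\beta_1\beta_2)$ is nonnegative so that Theorem~\ref{t:mainresult} is indeed applicable.
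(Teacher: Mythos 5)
Your argument is correct and is essentially the paper's own proof: the paper likewise reduces Theorem~\ref{thm:general} to Theorem~\ref{t:mainresult} by noting that $\beta_0$ only rescales the information matrix by a positive constant and by applying the coordinate scaling $\mathbf{x}\mapsto(x_1/|\beta_1|,x_2/|\beta_2|)$ with the parameter map $\bdgr{\beta}=(0,-1,-1,-\rho)^\top\mapsto(0,\beta_1,\beta_2,-\rho\beta_1\beta_2)^\top$, invoking equivariance as in Radloff and Schwabe. You merely spell out the details (the matrix $\mathbf{Q}$, the determinant identity, and the sign bookkeeping) that the paper leaves implicit, and these details check out.
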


Note that the settings $\mathbf{x}_0$, $\mathbf{x}_1$, and $\mathbf{x}_2$ of the locally $D$-optimal design $\xi_t$ in the model with interaction coincide with those of the optimal design for the model without interaction. 
Only a fourth setting $\mathbf{x}_3=(t/|\beta_1|,t/|\beta_2|)$ has been added in the interior of the design region.

 \section{Higher-dimensional Models}\label{s:k-dim}

In the present section on $k$-dimensional Poisson regression with $k$ explanatory variables ($\mathbf{x}=(x_1,x_2,...,x_k)$, $k \geq 3$) we restrict to the standardized case with zero intercept ($\beta_0=0$) and all main effects $\beta_1=...=\beta_k$ equal to $-1$ for simplicity of notation.
 Extensions to the case of  general $\beta_0$ and $\beta_1,...,\beta_k<0$ can be obtained by the scaling method used for Theorem~\ref{thm:general}.

We first note that for the $k$-dimensional Poisson regression without interactions 
\[
\mathbf{f}(\mathbf{x})^\top \bdgr{\beta} = \beta_0 + \sum_{j=1}^k \beta_j x_j 
\]
Russell et al.~\cite{RWL09} showed that the minimally supported design which assigns equal weights $1/(k+1)$ to the origin $\mathbf{x}_0=(0,...,0)$ and the $k$ axial settings $\mathbf{x}_1=(2,0,...,0)$, $\mathbf{x}_2=(0,2,...,0)$, $...$, $\mathbf{x}_k=(0,...,0,2)$ is locally $D$-optimal at $\bdgr{\beta}=(0,-1,...,-1)^\top$.
Schmidt and Schwabe \cite{SS17} more generally proved that in models without interactions the locally $D$-optimal design points coincide with their counterparts in the marginal one-dimensional models.
This approach will be extended in Theorems~\ref{thm:k-dim_1st-order} and \ref{thm:k-dim_2nd-order} to two- and three-dimensional marginals with interactions.

In what follows we mainly consider the particular situation that all interactions occurring in the models have values equal to $0$ and that the design region is the full orthant $\mathcal{X}=[0,\infty)^k$.
Setting the interactions to zero does not mean that we presume to know that there are no interactions in the model.
Instead we are going to determine locally optimal designs in models with interactions which are locally optimal at such $\bdgr{\beta}$ for which all interaction terms attain the value $0$.

We start with a generalization of Theorem~\ref{thm:prod} to a $k$-dimensional Poisson regression model with complete interactions 
\[
\mathbf{f}(\mathbf{x})^\top \bdgr{\beta} = \beta_0 + \sum_{j=1}^k \beta_j x_j + \sum_{i<j} \beta_{ij} x_i x_j + \quad ... \quad + \beta_{12...k} x_1 x_2 ... x_k ,
\]
where the number of parameters is $p = 2^k$.

\begin{thm}
\label{thm:prod-k}
	  In the $k$-dimensional Poisson regression model with complete interactions the minimally supported design $\xi_{-1}^* \otimes ... \otimes \xi_{-1}^*$ which assigns equal weights $1/p $ to the $p=2^k$ settings of the full factorial on $\{0,2\}^k$ is locally $D$-optimal at $\bdgr{\beta}$ on $\mathcal{X}=[0,\infty)^k$, when $\beta_1=...=\beta_k=-1$ and all interactions $\beta_{ij}, ..., \beta_{12...k}$ are equal to $0$.
\end{thm}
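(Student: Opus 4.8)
The plan is to generalize the factorization argument used for Theorem~\ref{thm:prod} from two factors to $k$ factors, so that the product design theorem (Theorem~4.2 in \cite{Schw96}) applies essentially verbatim. The first step is to observe that the complete-interactions regression function is a $k$-fold Kronecker product,
\[
\mathbf{f}(\mathbf{x}) = \bigotimes_{j=1}^k \mathbf{f}_1(x_j), \qquad \mathbf{f}_1(x_j) = (1,x_j)^\top,
\]
since expanding this product reproduces exactly the $2^k$ monomials $\prod_{j\in S}x_j$ over all subsets $S\subseteq\{1,\dots,k\}$, i.e.\ the intercept, all main effects, and all higher-order interaction terms. Correspondingly, the design region $\mathcal{X}=[0,\infty)^k$ is the Cartesian product of the marginal regions $\mathcal{X}_j=[0,\infty)$.

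The decisive second step is to check that the intensity factorizes, and this is where the hypothesis that all interaction parameters vanish enters. With $\beta_1=\dots=\beta_k=-1$ and every interaction coefficient equal to $0$, the linear predictor collapses to $\mathbf{f}(\mathbf{x})^\top\bdgr{\beta}=\beta_0-\sum_{j=1}^k x_j$, so that
\[
\lambda(\mathbf{x})=\exp\Bigl(\beta_0-\sum_{j=1}^k x_j\Bigr)=\prod_{j=1}^k\lambda_j(x_j),
\]
with marginal intensities $\lambda_1(x_1)=\exp(\beta_0-x_1)$ and $\lambda_j(x_j)=\exp(-x_j)$ for $j\ge 2$, corresponding to marginal parameter vectors $\bdgr{\beta}_1=(\beta_0,-1)^\top$ and $\bdgr{\beta}_j=(0,-1)^\top$. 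This factorization is precisely what fails for nonzero interactions (cf.\ the remark preceding Subsection~\ref{s:standard}), and it is the only place where the assumption on the interaction parameters is used.

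With both the regression function and the intensity factorized over the Cartesian product region, the third step is to invoke the marginal one-dimensional result: for $\beta_j=-1$ the design $\xi_{-1}^*$ placing weight $1/2$ on each of $x=0$ and $x=2/|\beta_j|=2$ is locally $D$-optimal at $\bdgr{\beta}_j$ on $[0,\infty)$. Theorem~4.2 in \cite{Schw96} then guarantees that the measure-theoretic product $\xi_{-1}^*\otimes\cdots\otimes\xi_{-1}^*$ is locally $D$-optimal for the product model. Since this product assigns weight $\prod_{j=1}^k \tfrac12 = 2^{-k}=1/p$ to each of the $2^k$ vertices of $\{0,2\}^k$, the equal-weight full-factorial structure of the claimed design emerges automatically, completing the argument.

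I do not expect a genuine obstacle: the proof is structurally identical to that of Theorem~\ref{thm:prod}, and the uniform weights need not be imposed separately, as they are inherited from the uniform marginals. The only point requiring care --- and the natural place for an error to hide --- is the verification in the first two steps that the full Kronecker structure of $\mathbf{f}$, combined with the vanishing interactions, yields a \emph{genuinely factorizing} intensity over the product region; once that is confirmed, the hypotheses of the product theorem are met and the conclusion is immediate.
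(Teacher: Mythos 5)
Your proof is correct and follows essentially the same route as the paper, which itself disposes of Theorem~\ref{thm:prod-k} by noting that the design region, the regression function $\mathbf{f}$, and the intensity $\lambda$ all factorize into their one-dimensional counterparts and then invoking the product-design theorem exactly as in the proof of Theorem~\ref{thm:prod}. Your write-up simply supplies the details (the $k$-fold Kronecker structure, the marginal parameter vectors, and the resulting $2^{-k}$ weights) that the paper omits.
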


The proof of Theorem~\ref{thm:prod-k} follows the lines of the proof of Theorem~\ref{thm:prod} as all of the design region $\mathcal{X}$, the vector of regression functions $\mathbf{f}$, and the intensity function $\lambda$ factorize to their one-dimensional counterparts.
Hence, details will be omitted.

Now we come back to the Poisson regression model with first-order interactions
\[
\mathbf{f}(\mathbf{x})^\top \bdgr{\beta} = \beta_0 + \sum_{j=1}^k \beta_j x_j + \sum_{i<j} \beta_{ij} x_i x_j ,
\]
where the number of parameters is $p = 1+k+k(k-1)/2$.

\begin{thm}
\label{thm:k-dim_1st-order}
	  In the $k$-dimensional Poisson regression model with first-order interactions the minimally supported design which assigns equal weights $1/p $ to the $p=1+k+k(k-1)/2$ settings $\mathbf{x}_0=(0,0,...,0)$, $\mathbf{x}_1=(2,0,...,0)$, $\mathbf{x}_2=(0,2,...,0)$, $...$, $\mathbf{x}_k=(0,...,0,2)$, and $\mathbf{x}_{ij}=\mathbf{x}_i+\mathbf{x}_j$, $1 \leq i < j \leq k$, is locally $D$-optimal at $\bdgr{\beta}$ on $\mathcal{X}=[0,\infty)^k$, when $\beta_1=...=\beta_k=-1$ and $\beta_{ij} = 0$, $1 \leq i < j \leq k$.  
\end{thm}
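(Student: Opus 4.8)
The plan is to apply the Kiefer--Wolfowitz equivalence theorem in the intensity-weighted form used for Theorem~\ref{t:mainresult}. At $\bdgr{\beta}=(0,-1,\dots,-1,0,\dots,0)^\top$ the intensity reduces to $\lambda(\mathbf{x})=\exp(-\sum_{j=1}^{k}x_j)$, so writing $\xi^*$ for the proposed design it suffices to show that the deduced sensitivity
\[
d(\mathbf{x};\xi^*)=\tfrac1p\,\mathbf{f}(\mathbf{x})^{\top}\mathbf{M}(\xi^*)^{-1}\mathbf{f}(\mathbf{x})-\exp\Bigl(\sum_{j=1}^{k}x_j\Bigr)
\]
is nonpositive on $\mathcal{X}=[0,\infty)^k$. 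A direct computation shows that for any equi-weighted minimally supported design with nonsingular information matrix the sensitivity equals $p$ at every support point, so $d(\cdot;\xi^*)$ already vanishes at the $p$ chosen points; the substance of the theorem is the global bound everywhere else. As in Theorem~\ref{thm:general}, restricting to $\beta_1=\dots=\beta_k=-1$ loses no generality, the general case following by the scaling and equivariance argument of Radloff and Schwabe~\cite{RS2016}.

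First I would compute $\mathbf{M}(\xi^*)$ and invert it using symmetry. Both the model and $\xi^*$ are invariant under the symmetric group $S_k$ acting on the coordinates, so $\mathbf{M}(\xi^*)$ is $S_k$-equivariant and the regression vector splits into three isotypic blocks: the intercept, the main-effects block $(x_1,\dots,x_k)$ carrying the natural permutation representation, and the interaction block $(x_ix_j)_{i<j}$ carrying the permutation representation on $2$-subsets. Summing $w\,\lambda(\mathbf{x})\,\mathbf{f}(\mathbf{x})\mathbf{f}(\mathbf{x})^{\top}$ over the three orbit types of support points --- the origin, the $k$ axial points $2\mathbf{e}_m$, and the $\binom{k}{2}$ points $2(\mathbf{e}_i+\mathbf{e}_j)$ --- yields the entries of $\mathbf{M}(\xi^*)$ as explicit combinations of $1$, $e^{-2}$ and $e^{-4}$ that depend only on the incidence pattern of the indices (this is precisely the association-scheme structure on the $2$-subsets). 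Block-diagonalising along the isotypic decomposition then reduces the inversion to a few scalar and $2\times 2$ systems and gives a closed form for $\mathbf{M}(\xi^*)^{-1}$, in which $\mathbf{f}(\mathbf{x})^{\top}\mathbf{M}(\xi^*)^{-1}\mathbf{f}(\mathbf{x})$ is a symmetric quartic polynomial in $x_1,\dots,x_k$.

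It then remains to bound this quartic against $\exp(\sum_j x_j)$. Here I would try to imitate the geometry of Lemmas~\ref{l:hyperbolicpath}--\ref{l:diagonal}: since $d(\cdot;\xi^*)$ is $S_k$-invariant and exponentially damped, I would argue that its maximum over $\mathcal{X}$ is attained on a low-dimensional invariant skeleton, namely the coordinate subspaces in which all but a few $x_j$ vanish, together with diagonal directions inside them. On each coordinate $2$-plane $\{x_m=0:\,m\neq i,j\}$ the active part of the regression vector is $(1,x_i,x_j,x_ix_j)$, which is exactly the two-dimensional model at $\rho=0$, so that the restricted inequality should be matched to the two-dimensional deduced sensitivity already shown to be nonpositive for $t=2$ in Theorem~\ref{t:mainresult}, in the spirit of the marginal reduction of Schmidt and Schwabe~\cite{SS17}.

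The hard part will be twofold. First, one must justify that the maximum of $d(\cdot;\xi^*)$ is genuinely attained on this skeleton: unlike the plain $S_2$-argument of Lemma~\ref{l:hyperbolicpath}, one has to control the directions transverse to the coordinate $2$-planes, where raising a third coordinate $x_m$ simultaneously inflates several interaction monomials $x_ix_m,x_jx_m$ in the quartic while the factor $\exp(x_m)$ grows; proving that this trade-off always works against the experimenter, so that $d$ strictly decreases off the $2$-planes, is the crux and is where the explicit form of $\mathbf{M}(\xi^*)^{-1}$ and, if needed, the \Mathematica-assisted inequalities mentioned in the introduction will enter. Second, the restriction to a $2$-plane is not literally the two-dimensional model, because the surplus main-effect and intercept directions of the full model feed cross terms into $\mathbf{f}(\mathbf{x})^{\top}\mathbf{M}(\xi^*)^{-1}\mathbf{f}(\mathbf{x})$ through the off-block entries of the inverse; matching the restricted sensitivity to the known two-dimensional one therefore requires tracking these contributions carefully rather than simply discarding the absent coordinates.
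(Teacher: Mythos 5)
Your overall strategy (equivalence theorem with intensity, $S_k$-symmetry, reduction to a low-dimensional critical set, marginal matching on two-dimensional faces) is the right one and matches the paper's in outline, but the proposal has a genuine gap at exactly the point you yourself flag as ``the crux'': the reduction to the skeleton is asserted, not proved, and --- more importantly --- the skeleton is not what you need it to be. The paper's reduction works by slicing $\mathcal{X}$ along contours of constant intensity, which here are the hyperplanes $\sum_j x_j = \mathrm{const}$ meeting the orthant in equilateral simplices; on every line inside such a simplex the sensitivity is a quartic with positive leading coefficient, and the Farrell--Kiefer--Wolfowitz symmetry argument \cite{FKW1967} then confines interior maxima to the \emph{diagonals} of the $j$-dimensional faces for all $2\le j\le k$, including the main diagonal $x_1=\dots=x_k$ of the full orthant. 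These higher diagonals are not contained in any coordinate $2$-plane, so your plan of matching everything to the two-dimensional $\rho=0$ sensitivity cannot cover them. The paper must (and does) verify a separate explicit inequality there, namely
\[
\bigl(\tbinom{j}{2} q^2 - j q + 1\bigr)^2 + j e^{2} \bigl((j-1) q^2 - q\bigr)^2 + \tbinom{j}{2} e^{4} q^4 \;\leq\; \exp(2 j q)
\]
for all $q\ge 0$ and $2\le j\le k$, established via a degree-$5$ power-series minorant of $\exp(2jq)$. Without an argument on these diagonals your proof is incomplete, and this is where most of the computational content lives.

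Two smaller points. First, your second worry --- that restricting to a coordinate $2$-plane does not literally reproduce the two-dimensional model because of off-block entries of $\mathbf{M}(\xi^*)^{-1}$ --- dissolves if you follow the paper's route of writing $\mathbf{M}(\xi^*)^{-1} = \mathbf{F}^{-1}\mathbf{W}^{-1}\mathbf{\Lambda}^{-1}(\mathbf{F}^{-1})^{\top}$ for the minimally supported design: the essential design matrix $\mathbf{F}$ is block lower-triangular (with BIBD incidence matrices in the off-diagonal blocks), its inverse inherits that structure, and the restriction of the deduced sensitivity to a face then coincides exactly with its two-dimensional counterpart, as in the proof of Lemma~\ref{l:boundary}. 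This is cleaner than inverting $\mathbf{M}(\xi^*)$ through the isotypic decomposition of the association scheme, which, while correct, does not hand you the exact marginal reduction for free. Second, your claim that the sensitivity of any equi-weighted minimally supported design equals $p$ at every support point is true but is not evidence toward optimality; it holds for every nonsingular saturated design and contributes nothing to the global bound.
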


For illustrative purposes we specify this result for $k=3$ components.

\begin{cor}\label{ex:3D}
	  In the three-dimensional Poisson regression model with first-order interactions 
\[	  
\mathbf{f}(\mathbf{x})^\top \bdgr{\beta} = \beta_0 + \beta_1 x_1 + \beta_2 x_2 + \beta_3 x_3 + \beta_{12} x_1 x_2 + \beta_{13} x_1 x_3 + \beta_{23} x_2 x_3  
\]
	 the minimally supported design which assigns equal weights $1/7$ to the $7$ settings $\mathbf{x}_0=(0,0,0)$, $\mathbf{x}_1=(2,0,0)$, $\mathbf{x}_2=(0,2,0)$, $\mathbf{x}_3=(0,0,2)$, $\mathbf{x}_4=(2,2,0)$, $\mathbf{x}_5=(2,0,2)$, and $\mathbf{x}_6=(0,2,2)$ is locally $D$-optimal at $\bdgr{\beta}$ on $\mathcal{X}=[0,\infty)^3$, when $\beta_1=\beta_2=\beta_3=-1$ and $\beta_{12} = \beta_{13} = \beta_{23} = 0$.
\end{cor}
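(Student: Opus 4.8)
The plan is to deduce the corollary directly from Theorem~\ref{thm:k-dim_1st-order} by setting $k=3$; the only work is to confirm that the displayed seven-point design is exactly the one prescribed by that theorem under the stated parameter values.

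First I would verify the parameter count. For the first-order interaction model the number of parameters is $p = 1 + k + k(k-1)/2$, which for $k=3$ equals $1+3+3=7$. This matches the seven settings in the corollary, so the candidate design is minimally supported, as required. Next I would match the support points. Theorem~\ref{thm:k-dim_1st-order} places equal mass at the origin $\mathbf{x}_0=(0,0,0)$, at the $k$ axial points with a single coordinate equal to $2$, and at the pairwise sums $\mathbf{x}_{ij}=\mathbf{x}_i+\mathbf{x}_j$ for $1\le i<j\le k$. For $k=3$ the axial points are $\mathbf{x}_1=(2,0,0)$, $\mathbf{x}_2=(0,2,0)$, $\mathbf{x}_3=(0,0,2)$, and the three pairwise sums evaluate to $\mathbf{x}_{12}=(2,2,0)$, $\mathbf{x}_{13}=(2,0,2)$, $\mathbf{x}_{23}=(0,2,2)$. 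After relabeling $\mathbf{x}_4=\mathbf{x}_{12}$, $\mathbf{x}_5=\mathbf{x}_{13}$, $\mathbf{x}_6=\mathbf{x}_{23}$, these are precisely the points listed in the corollary, each of weight $1/7$. Finally, the hypotheses $\beta_1=\beta_2=\beta_3=-1$ and $\beta_{12}=\beta_{13}=\beta_{23}=0$ are exactly the assumptions of Theorem~\ref{thm:k-dim_1st-order} specialized to $k=3$.

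Since the statement is merely an instantiation of the already-established general theorem, no genuine obstacle arises; the proof is complete upon invoking Theorem~\ref{thm:k-dim_1st-order}. The only point requiring care is the purely notational one, namely keeping track of the relabeling of the double-indexed interaction points $\mathbf{x}_{ij}$ as the single-indexed points $\mathbf{x}_4,\mathbf{x}_5,\mathbf{x}_6$. Should an independent check be desired, one could instead verify local $D$-optimality from scratch via the Kiefer--Wolfowitz equivalence theorem, forming $\mathbf{M}_{\sbdgr{\beta}}(\xi)$ for the seven-point design and confirming that $\psi(\mathbf{x};\xi)$ does not exceed $p=7$ on all of $[0,\infty)^3$; but this would only reproduce the content of Theorem~\ref{thm:k-dim_1st-order} for this particular dimension and is therefore unnecessary.
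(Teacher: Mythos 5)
Your proposal is correct and matches the paper's treatment: the corollary is presented there purely as the $k=3$ instance of Theorem~\ref{thm:k-dim_1st-order} (``for illustrative purposes we specify this result''), with no separate proof beyond the specialization you carry out. Your verification of the parameter count $p=7$, the identification of the support points, and the matching of hypotheses is exactly what is needed.
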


\begin{figure}
	\centering
	\includegraphics[scale=.5]{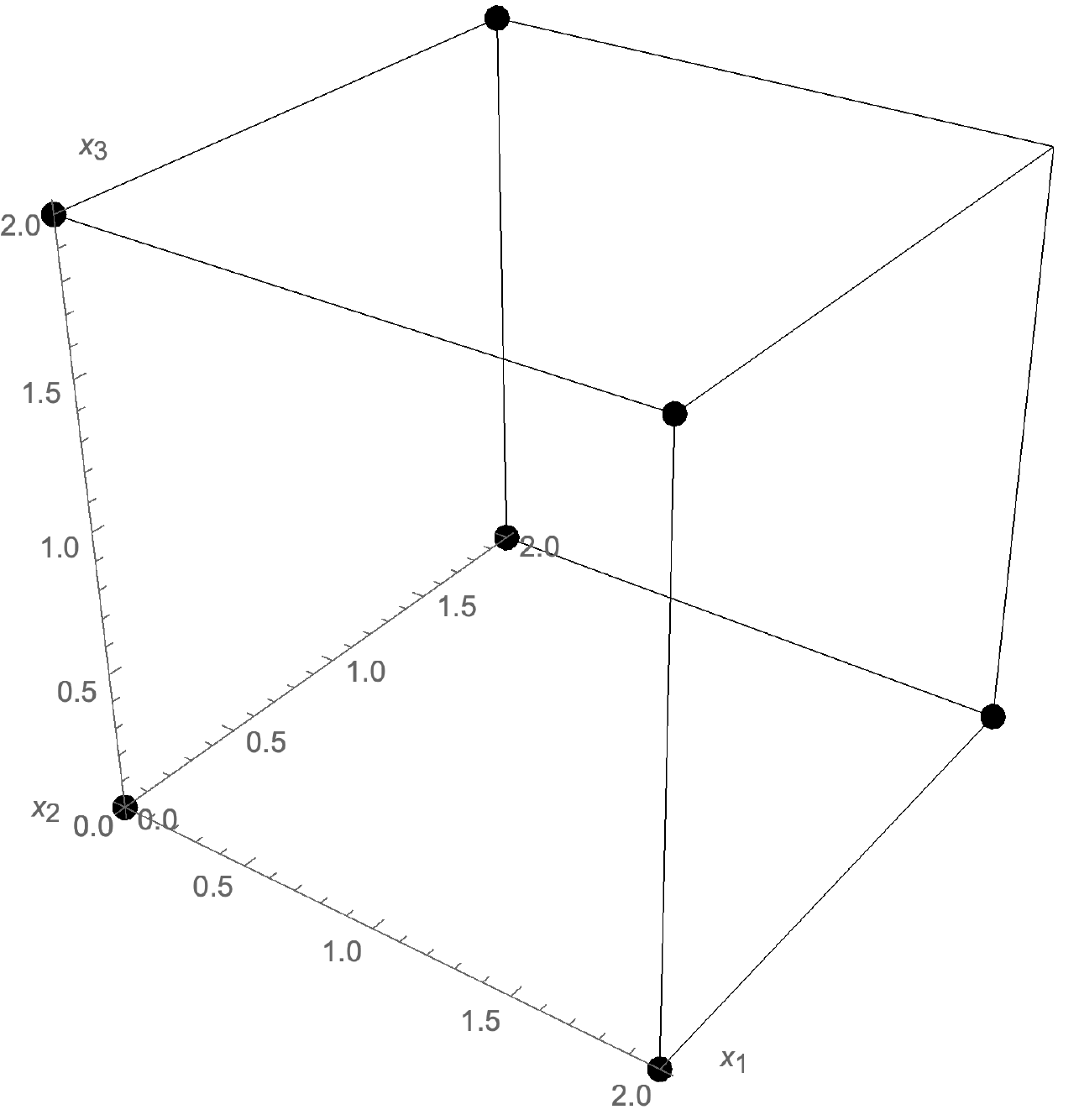}
	\caption{Design points in Example~\ref{ex:3D}}\label{fig:example3D}
\end{figure}

The optimal design points of Corollary~\ref{ex:3D} are visualized in Figure~\ref{fig:example3D}.  
Note that in in the Poisson regression model with first-order interactions the locally $D$-optimal design has only support points on the axes and on the diagonals of the faces, but none in the interior of the design region, and that the support points on each face coincide with the optimal settings for the corresponding two-dimensional marginal model.
Thus only those settings are included from the full factorial $\{0,2\}^k$ of the complete interaction case (Theorem~\ref{thm:prod-k}) which have, at most, two non-zero components, and the locally $D$-optimal design concentrates on settings with higher intensity.
This is in accordance with the findings for the Poisson regression model without interactions, where only those settings will be used which have, at most, one non-zero component, and carries over to higher-order interactions.
In particular, for the Poisson regression model with second-order interactions
\[
\mathbf{f}(\mathbf{x})^\top \bdgr{\beta} = \beta_0 + \sum_{j=1}^k \beta_j x_j + \sum_{i<j} \beta_{ij} x_i x_j + \sum_{i<j<\ell} \beta_{ij\ell} x_i x_j x_\ell ,
\]
where the number of parameters is $p = 1+k+k(k-1)/2+k(k-1)(k-2)/6$, we obtain a similar result.

\begin{thm}
\label{thm:k-dim_2nd-order}
	  In the $k$-dimensional Poisson regression model with second-order interactions the minimally supported design which assigns equal weights $ 1/p $ to the $p=1+k+k(k-1)/2+k(k-1)(k-2)/6$ settings $\mathbf{x}_0=(0,0,...,0)$, $\mathbf{x}_1=(2,0,...,0)$, $\mathbf{x}_2=(0,2,...,0)$, $...$, $\mathbf{x}_k=(0,...,0,2)$, $\mathbf{x}_{ij}=\mathbf{x}_i+\mathbf{x}_j$, $1 \leq i < j \leq k$, and $\mathbf{x}_{ij\ell}=\mathbf{x}_i+\mathbf{x}_j+\mathbf{x}_\ell$, $1 \leq i < j < \ell \leq k$, is locally $D$-optimal at $\bdgr{\beta}$ on $\mathcal{X}=[0,\infty)^k$, when $\beta_1=...=\beta_k=-1$, $\beta_{ij} = 0$, $1 \leq i < j \leq k$, and $\beta_{ij\ell} = 0$, $1 \leq i < j < \ell \leq k$.  
\end{thm}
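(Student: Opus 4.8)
The plan is to verify the Kiefer--Wolfowitz/Fedorov equivalence condition directly: with $p=1+k+\binom{k}{2}+\binom{k}{3}$ it suffices to show that the sensitivity function $\psi(\mathbf{x};\xi^*)=\lambda(\mathbf{x})\,\mathbf{f}(\mathbf{x})^{\top}\mathbf{M}(\xi^*)^{-1}\mathbf{f}(\mathbf{x})$ satisfies $\psi(\mathbf{x};\xi^*)\le p$ on $\mathcal{X}=[0,\infty)^k$, equality at the support points being automatic. The decisive structural observation is that both the support points and the regression terms are indexed by the subsets $S\subseteq\{1,\dots,k\}$ with $|S|\le 3$, so the design is minimally supported and its design matrix $F=\big(2^{|S|}\mathbf{1}[S\subseteq T]\big)_{T,S}$ is square and invertible. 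Writing $u_j=x_j/2$ and $\Lambda=\mathrm{diag}(e^{-2|T|})$ one gets $\mathbf{M}(\xi^*)^{-1}=p\,F^{-1}\Lambda^{-1}(F^{-1})^{\top}$ and hence the closed form
\[
\psi(\mathbf{x};\xi^*)=p\sum_{|T|\le 3}e^{2|T|}\lambda(\mathbf{x})\,g_T(\mathbf{x})^2,
\]
where the coefficients $g_T$ solve the triangular (truncated Möbius) system $\sum_{T\supseteq S,\,|T|\le 3}g_T(\mathbf{x})=\prod_{j\in S}u_j$, so that $g_T(\mathbf{x})=\sum_{S\supseteq T,\,|S|\le 3}(-1)^{|S|-|T|}\prod_{j\in S}u_j$. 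At a support point $\mathbf{x}_{T_0}$ one has $g_T=\mathbf{1}[T=T_0]$, which recovers $\psi=p$.

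I would then separate the boundary from the interior. On each facet $\{x_j=0\}$ the Möbius formula forces $g_T\equiv 0$ whenever $j\in T$, while the remaining $g_T$ coincide with the coefficients of the $(k-1)$-dimensional second-order design; this yields $\psi_k|_{x_j=0}=(p_k/p_{k-1})\,\psi_{k-1}$, so the bound on every facet follows by induction on $k$ from $\psi_{k-1}\le p_{k-1}$. The base case $k\le 3$ is already settled, since for $k=3$ the second-order term is the complete interaction and Theorem~\ref{thm:prod-k} applies, while $k\le 2$ is covered by Theorems~\ref{thm:prod}--\ref{t:mainresult}. Because $\lambda(\mathbf{x})=\exp(-\sum_j x_j)$ decays faster than the polynomials $g_T^2$ grow, $\psi\to 0$ at infinity, so a violation of $\psi\le p$ could occur only at an interior critical point.

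The heart of the argument is therefore the interior, where I would reparametrize by the intensity contours $\{\sum_j x_j=c\}$, exactly as in the two-dimensional proof, and exploit the permutation symmetry of $\xi^*$. The target is a higher-dimensional analogue of Lemma~\ref{l:hyperbolicpath}: along each contour the sensitivity is maximized at a point with at most three positive coordinates. Granting this, any interior maximizer lies in the closure of a three-dimensional face $A$, on which the truncation in the Möbius sum becomes vacuous, so $g_T=\prod_{i\in T}u_i\prod_{i\in A\setminus T}(1-u_i)$ are the complete-interaction coefficients and the sum factorizes,
\[
\psi(\mathbf{x};\xi^*)=p\prod_{i\in A}e^{-2u_i}\big((1-u_i)^2+e^{2}u_i^2\big)=p\prod_{i\in A}h(u_i),\qquad h(u)=e^{-2u}\big((1-u)^2+e^{2}u^2\big).
\]
Since $h(u)\le 1$ for $u\ge 0$ --- the one-variable inequality underlying the one-dimensional result and Theorem~\ref{thm:prod-k} --- this gives $\psi\le p$ and closes the argument.

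I expect the reduction to three active coordinates to be the main obstacle, just as Lemma~\ref{l:hyperbolicpath} was in two dimensions; it is the step that genuinely uses that the interactions are truncated at order two. Concretely I would first dispose of the fully symmetric situation by restricting to the diagonal $x_1=\dots=x_k=2u$, where $g_T$ depends only on $|T|$ and the condition collapses to the single-variable inequality
\[
\sum_{t=0}^{3}\binom{k}{t}e^{2t}\Big(u^t\sum_{i=0}^{3-t}(-1)^i\binom{k-t}{i}u^i\Big)^2\le e^{2ku},\qquad u\ge 0,
\]
an analogue of Lemma~\ref{l:diagonal} provable by elementary though tedious, \Mathematica-assisted estimates. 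The remaining difficulty is to exclude asymmetric interior maxima; I would attack this through a convexity/majorization argument along each contour, treating the quadratic form $\mathbf{f}^{\top}\mathbf{M}(\xi^*)^{-1}\mathbf{f}$ across versus along the contours as advertised in the abstract, and, failing a clean argument, by a monotone coordinate-collapsing comparison that sends a point with four or more positive coordinates to one with fewer without decreasing $\psi$.
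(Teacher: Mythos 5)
Your setup reproduces the paper's machinery almost exactly: the Möbius/incidence structure of $\mathbf{F}^{-1}$ (the paper writes it with the blocks $-\mathbf{1}$, $-\mathbf{S}_2$, $\mathbf{S}_3$, $-\mathbf{S}_{23}$, which is precisely your $(-1)^{|S|-|T|}$), the facet restriction to the lower-dimensional marginal, the observation that on a three-dimensional face the truncation is vacuous so Theorem~\ref{thm:prod-k} applies, and the one-variable diagonal inequality, which coincides term by term with the expression the paper verifies by a \Mathematica-assisted power series expansion. All of that is sound.

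The genuine gap is the interior reduction, and the target you set for it is the wrong one. The claim \enquote{along each contour the sensitivity is maximized at a point with at most three positive coordinates} is not what is needed and is not what the symmetry argument delivers: the maximum over a contour simplex can perfectly well sit at the barycenter, where all $k$ coordinates are positive, so a \enquote{coordinate-collapsing comparison that sends a point with four or more positive coordinates to one with fewer without decreasing $\psi$} cannot exist in general, and the \enquote{convexity/majorization} alternative is left entirely unexecuted. The correct (and provable) target, which is what the paper extracts from Farrell--Kiefer--Walbran, is that any maximizer on a contour has \emph{all of its positive coordinates equal}. This follows from a pairwise argument: for fixed $i\neq j$ move along $x_i\mapsto a+s$, $x_j\mapsto a-s$ with the other coordinates frozen, which stays on the intensity contour; since each regression function contains $x_ix_j$ to at most the first power, $\mathbf{f}$ restricted to this line is quadratic in $s$, so $\psi$ is a quartic in $s$, it is \emph{even} in $s$ by the permutation invariance of $\xi^*$, and its leading coefficient $\mathbf{f}_2^{\top}\mathbf{M}^{-1}\mathbf{f}_2$ is positive. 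An even quartic with positive leading coefficient on a symmetric interval attains its maximum only at $s=0$ or at the endpoints, i.e.\ at $x_i=x_j$ or where one of the two coordinates vanishes. Hence every contour maximizer lies on the diagonal of some $j$-dimensional face; the full diagonal is then covered by your one-variable inequality and the faces by your induction (or, as in the paper, by checking the analogous inequality for every $2\le j\le k$). Note that this even-quartic structure is exactly the point where the truncation at second-order interactions is used --- it is the higher-dimensional analogue of Lemma~\ref{l:hyperbolicpath} you were looking for --- and without it your argument does not close.
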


The proofs of Theorems~\ref{thm:k-dim_1st-order} and \ref{thm:k-dim_2nd-order} are based on symmetry properties which get lost if one or more of the interaction terms are non-zero.
However, if only few components of $\mathbf{x}$ may be active (non-zero), then locally $D$-optimal designs may be obtained in the spirit of the proof of Lemma~\ref{l:boundary} for synergetic interaction effects.
We demonstrate this in the setting of first-order interactions $\rho_{ij}=-\beta_{ij}\geq 0$, when the design region $\mathcal{X}$ consists of the union of the two-dimensional faces of the orthant, i.\,e.\ when, at most, two components of $\mathbf{x}$ can be active.

\begin{thm}
\label{thm:faces}
	  Consider the $k$-dimensional Poisson regression model with first-order interactions on $\mathcal{X}=\bigcup_{i<j}\mathcal{X}_{ij}$, where $\mathcal{X}_{ij}=\{(x_1,...,x_k);\ x_i,x_j \geq 0, x_\ell=0 ~~\text{for}~~ \ell \neq i,j\}$ is the two-dimensional face related to the $i$th and $j$th component.
	  Let $\beta_1=...=\beta_k=-1$, $\rho_{ij} = -\beta_{ij} \geq 0$, $t_{ij}=(\sqrt{1+8\rho_{ij}}-1)/(2\rho_{ij})$ for $\rho_{ij} > 0$, $t_{ij}=2$ for $\rho_{ij} = 0$, and $\mathbf{x}_{ij}\in\mathcal{X}_{ij}$ with $x_i=x_j=t_{ij}$, $1 \leq i < j \leq k$.
	  Then the minimally supported design which assigns equal weights $1/(1+k+k(k-1)/2)$ to the $1+k+k(k-1)/2$ settings $\mathbf{x}_0=(0,0,...,0)$, $\mathbf{x}_1=(2,0,...,0)$, $\mathbf{x}_2=(0,2,...,0)$, $...$, $\mathbf{x}_k=(0,...,0,2)$, and $\mathbf{x}_{ij}$, $1 \leq i < j \leq k$, is locally $D$-optimal at $\bdgr{\beta}$ on $\mathcal{X}$.  
\end{thm}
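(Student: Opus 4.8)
The plan is to verify the Kiefer--Wolfowitz condition $\psi(\mathbf{x};\xi^*)\le p$ for the proposed design $\xi^*$, exploiting that the design region is the union of the two-dimensional faces $\mathcal{X}_{ij}$. The decisive observation is that for a point $\mathbf{x}\in\mathcal{X}_{ij}$ only the components $x_i$ and $x_j$ are non-zero, so the regression vector $\mathbf{f}(\mathbf{x})$ has non-zero entries only in the four coordinates indexed by $\{0,i,j,(ij)\}$, which I abbreviate by $S$. Consequently the sensitivity function on $\mathcal{X}_{ij}$ involves $\mathbf{M}(\xi^*)^{-1}$ only through its $4\times4$ principal submatrix $[\mathbf{M}(\xi^*)^{-1}]_{S}$, and it suffices to control this block face by face.

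To compute $[\mathbf{M}(\xi^*)^{-1}]_{S}$ I would order the parameters as intercept, main effects $\mathcal{M}=\{1,\dots,k\}$, and first-order interactions $\mathcal{I}=\{(\ell m):\ell<m\}$, and write out the block structure of $\mathbf{N}:=p\,\mathbf{M}(\xi^*)$. Since the interaction regressor $x_\ell x_m$ is non-zero at exactly one support point, namely $\mathbf{x}_{\ell m}$, the interaction--interaction block $\mathbf{N}_{\mathcal{I}\mathcal{I}}$ is diagonal with strictly positive entries $t_{\ell m}^4 e^{-2-t_{\ell m}}$. This makes it natural to eliminate $\mathcal{I}$ by a Schur complement. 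The heart of the argument is that after this elimination the remaining matrix on $\{0\}\cup\mathcal{M}$ collapses, by a sequence of exact cancellations, to the arrowhead information matrix of the corresponding model without interactions: its intercept entry becomes $1+ka$, its intercept--main entries $2a$, its main diagonal entries $4a$ (with $a=e^{-2}$), and its main off-diagonal entries vanish, all independently of the $t_{\ell m}$. Inverting this arrowhead matrix is then elementary.

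With the reduced inverse $\tilde{\mathbf{N}}^{-1}$ in hand I would reassemble $[\mathbf{N}^{-1}]_{S}$ via the block-inversion formula. Here a second cancellation occurs: because $\mathbf{N}_{\mathcal{I}\mathcal{I}}$ is diagonal and the coupling of the coordinate $(ij)$ to $\{0\}\cup\mathcal{M}$ is carried by a single vector supported on $\{0,i,j\}$, every entry of $[\mathbf{N}^{-1}]_{S}$ turns out to depend only on the pair data $t_{ij}$ and $e^{-2-t_{ij}}$ and on the restriction of $\tilde{\mathbf{N}}^{-1}$ to $\{0,i,j\}$. The latter restriction coincides with the corresponding block of the inverse of the $4\times4$ information matrix of the two-dimensional face model of Theorem~\ref{t:mainresult}. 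I therefore expect the clean identity $[\mathbf{M}(\xi^*)^{-1}]_{S}=\tfrac{p}{4}\,\mathbf{M}_{ij}(\xi_{t_{ij}})^{-1}$, where $\mathbf{M}_{ij}(\xi_{t_{ij}})$ is the information matrix of the optimal two-dimensional design on $\mathcal{X}_{ij}$. Since $\mathbf{f}(\mathbf{x})$ for $\mathbf{x}\in\mathcal{X}_{ij}$ is exactly the two-dimensional regression vector, this yields $\psi(\mathbf{x};\xi^*)=\tfrac{p}{4}\,\psi_{2}(\mathbf{x};\xi_{t_{ij}})$ on the face; Theorem~\ref{t:mainresult} bounds the right-hand factor by $4$, so $\psi(\mathbf{x};\xi^*)\le p$ on each $\mathcal{X}_{ij}$ and hence on all of $\mathcal{X}$. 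The equivalence theorem then gives local $D$-optimality.

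The main obstacle is this middle step, because the principal submatrix of an inverse is not the inverse of the principal submatrix; the argument only works because the interaction block is diagonal and because the contributions of all other faces cancel exactly in the two Schur complements. I would carry out these cancellations carefully, noting that they do not use the specific value of $t_{ij}$ (which enters only through the two-dimensional bound of Theorem~\ref{t:mainresult}), and I would separately record that $\mathbf{N}$ is non-singular, which follows since the reduced arrowhead block is positive definite and the diagonal interaction block has strictly positive entries. This parallels the spirit of Lemma~\ref{l:boundary}, where a higher-dimensional sensitivity was reduced to a controllable lower-dimensional one.
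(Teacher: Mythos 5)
Your argument is correct, and its overall strategy --- verify the Kiefer--Wolfowitz condition face by face by showing that on $\mathcal{X}_{ij}$ the sensitivity of $\xi^*$ collapses to $\tfrac{p}{4}$ times the sensitivity of the two-dimensional optimal design $\xi_{t_{ij}}$, then invoke Theorem~\ref{t:mainresult} --- is exactly the strategy the paper intends when it says the result ``follows as in the proof of Lemma~\ref{l:boundary}.'' Where you differ is in the mechanism used to establish the key identity $[\mathbf{M}(\xi^*)^{-1}]_{S}=\tfrac{p}{4}\,\mathbf{M}_{ij}(\xi_{t_{ij}})^{-1}$. You work directly on the information matrix with two nested Schur complements, and your cancellations do all check out: eliminating the diagonal interaction block removes each point $\mathbf{x}_{\ell m}$ entirely from the remaining $\{0\}\cup\mathcal{M}$ block (because each interaction regressor is non-zero at exactly one support point), leaving the arrowhead matrix $e_0e_0^\top+e^{-2}\sum_i(e_0+2e_i)(e_0+2e_i)^\top$ you describe; and the further coincidence that $[\tilde{\mathbf{N}}^{-1}]_{\{0,i,j\}}$ equals the inverse of the three-point arrowhead rests on the Schur complement $1+ka-k\cdot(2a)^2/(4a)=1$ being independent of $k$. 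The paper's route is substantially shorter: since the design is minimally supported, $\mathbf{M}(\xi^*)^{-1}=p\,\mathbf{F}^{-1}\mathbf{\Lambda}^{-1}\mathbf{F}^{-\top}$, and the rows of $\mathbf{F}^{-1}$ indexed by the parameters $S=\{0,i,j,(ij)\}$ are supported on the four support points lying in $\mathcal{X}_{ij}$ (one checks that the zero-padded $\mathbf{F}_{ij}^{-1}$ satisfies the defining equations of those rows, because every regressor outside $S$ vanishes at every support point in $\mathcal{X}_{ij}$), which yields $[\mathbf{M}(\xi^*)^{-1}]_{S}=p\,\mathbf{F}_{ij}^{-1}\mathbf{\Lambda}_{ij}^{-1}\mathbf{F}_{ij}^{-\top}=\tfrac{p}{4}\,\mathbf{M}_{ij}(\xi_{t_{ij}})^{-1}$ in one step and with no dependence on the particular cancellations. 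What your version buys is an explicit description of the reduced information structure (the arrowhead matrix of the no-interaction model), which is illuminating; what it costs is that each of your ``exact cancellations'' has to be verified separately, whereas the triangular-$\mathbf{F}$ argument delivers them all at once.
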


This result follows as in the proof of Lemma~\ref{l:boundary}. 
We believe that the $ D $-optimality of the design in Theorem~\ref{thm:faces} could also hold on the whole positive orthant if we assume that the prespecified interaction parameters are identical and non-positive. A proof of this statement should follow in the spirit of Farrell et al.~\cite{FKW1967}, similar to the constructions in the Lemmas~\ref{l:hyperbolicpath} and \ref{l:diagonal} and the proof of Theorem~\ref{thm:k-dim_1st-order}.

However, in the situation of general synergy effects an analogon to Lemma~\ref{l:hyperbolicpath} cannot be established because of the lacking symmetry.
Hence, it remains open whether the design of Theorem~\ref{thm:faces} retains its optimality in the general setting.

 \section{Discussion}\label{s:discussion}

The main purpose of the present paper is to characterize locally $D$-optimal designs explicitly for the two-dimensional Poisson regression model with interaction on the unbounded design region of quadrant \textrm{I} when both main effects as well as the interaction effect are negative, and to present a rigorous proof for their optimality.
Obviously the designs specified in Theorem~\ref{thm:general} remain optimal on design regions which are subsets of quadrant \textrm{I} and cover the support points of the respective design.
For example, if the design region is a rectangle, $\mathcal{X} = [0,b_1] \times [0,b_2]$, then the design of Theorem~\ref{thm:general} is optimal as long as $b_1 \geq 2/|\beta_1|$ and $b_2 \geq 2/|\beta_2|$ for the two components.
Furthermore, if the design region is shifted, $\mathcal{X} = [a_1,\infty) \times [a_2,\infty)$ or a sufficiently large subregion of that, then also the locally $D$-optimal design is shifted accordingly and assigns equal weights $1/4$ to $\mathbf{x}_0=(a_1,a_2)$, $\mathbf{x}_1=(a_1+2/|\beta_1|,a_2)$, $\mathbf{x}_2=(a_1,a_2+2/|\beta_2|)$, and $\mathbf{x}_3=(a_1+t/|\beta_1|,a_2+t/|\beta_2|)$ where $t$ is defined as in Theorem~\ref{thm:general}.

Although the locally $D$-optimal designs only differ in the location of the support point on the diagonal, if the main effects are kept fixed, they are quite sensitive with respect to the strength $\rho$ of the synergy parameter in their performance.
The quality of their performance can be measured in terms of the local $D$-efficiency which is defined as $\mathrm{eff}_D(\xi,\bdgr{\beta}) = \left(\det(\mathbf{M}_{\sbdgr{\beta}}(\xi))/\det(\mathbf{M}_{\sbdgr{\beta}}(\xi_{\sbdgr{\beta}}^*))\right)^{(1/p)}$ for a design $\xi$, where $\xi_{\sbdgr{\beta}}^*$ denotes the locally $D$-optimal design at $\bdgr{\beta}$.
This efficiency can be interpreted as the asymptotic proportion of observations required for the locally $D$-optimal $\xi_{\sbdgr{\beta}}^*$ to obtain the same precision as for the competing design $\xi$ of interest.
For example, in the standardized case of Subsection~\ref{s:standard} the design $\xi_x$ would be locally $D$-optimal when the strength of synergy would be $(2-x)/x^2$. 
Its local $D$-efficiency can be calculated as $\mathrm{eff}_D(\xi,\bdgr{\beta}) = (x/t) \exp((2 t + \rho t^2 - 2 x - \rho x^2)/4)$ when $\rho$ is the true strength of synergy and $t$ is the corresponding optimal coordinate on the diagonal ($t=(\sqrt{1+8\rho}-1)/(2\rho)$ for $\rho > 0$ and  $t=2$ for $\rho = 0$).
For selected values of $x$ the local $D$-efficiencies are depicted in Figure~\ref{fig:eff}.
\begin{figure}
	\includegraphics[scale=1]{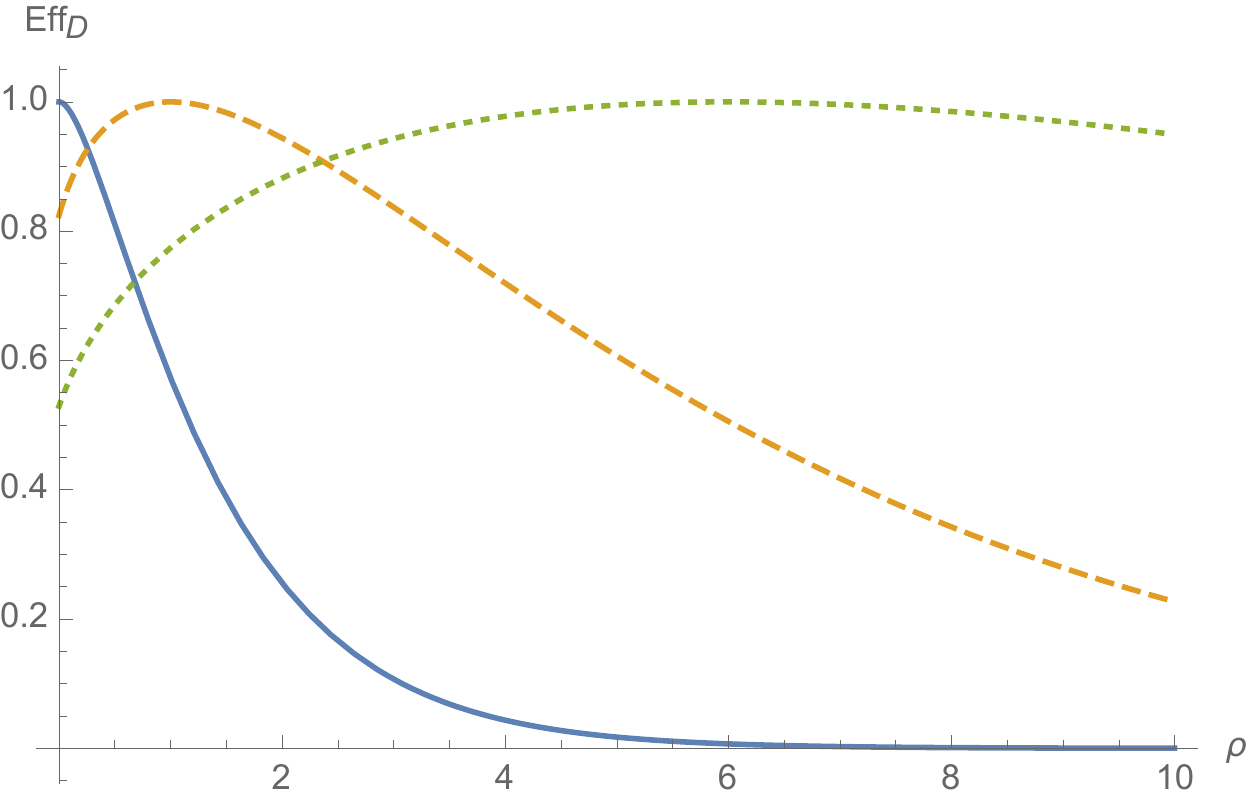}
	\caption{Efficiency of $ \xi_x $ for $ x=2 $ (solid line), $ x=1 $ (dashed) and $ x=1/2 $ (dotted)}\label{fig:eff}
\end{figure}
The appealing product-type design $\xi_2$ of Theorem~\ref{thm:prod} rapidly loses efficiency if the strength $\rho$ of synergy substantially increase.
The triangular design $\xi_1$ seems to be rather robust over a wide range of strength parameters, while for smaller $x$ the design $\xi_x$ loses efficiency when there is no synergy effect ($\rho=0$).
Hence, it would be desirable to determine robust designs like maximin $D$-efficient or weighted (``Bayesian'') optimal designs (see e.\,g.\ Atkinson et al.~\cite{ADT2007}), but this would go beyond the scope of the present paper.

If in contrast to the situation of Theorems~\ref{t:mainresult} and \ref{thm:general} there is an antagonistic interaction effect which means that $\beta_{12}$ is positive ($\rho<0$), no optimal design will exist on quadrant \textrm{I} because the determinant of the information matrix becomes unbounded.
However, if we restrict the design region to a rectangle one may be tempted to extend the above results. 
For example, in the standardized case ($\beta_1=\beta_2=-1$) on a square design region Lemma~\ref{l:design} may be extended as follows

\begin{lemma}\label{l:antagon}
	Let $b\geq 2$, $\rho < 0$, and $t=(\sqrt{1+8\rho}-1)/(2\rho)$ for $\rho > -1/8$.
	\\
(a)	If $\rho > -1/8$, $t \leq b$ and $t^4\exp(-2 t - \rho t^2) \geq b^4\exp(-2 b - \rho b^2)$, then the design $\xi_t$ is	locally $D$-optimal within the class $\Xi_0$ on $\mathcal{X}=[0,b]^2$.
\\
(b)	If $\rho \leq -1/8$ or $b < t$ or $t^4\exp(-2 t - \rho t^2) < b^4\exp(-2 b - \rho b^2)$, then the design $\xi_b$ is	locally $D$-optimal within the class $\Xi_0$ on $\mathcal{X}=[0,b]^2$.
\end{lemma}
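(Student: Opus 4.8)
The plan is to reduce the optimization over $\Xi_0$ to three decoupled one-dimensional problems by exploiting the minimally supported structure of the candidate designs. A design in $\Xi_0$ is determined by the three free coordinates $x_1,x_2,t\in[0,b]$ --- the abscissae of the axial points $(x_1,0)$, $(0,x_2)$ and of the diagonal point $(t,t)$ --- together with the weights. Since for a fixed support the $D$-optimal weights of a minimally supported design are $1/4$ each (Silvey \cite{Silv80}, via the concavity of $\prod_i w_i$ under $\sum_i w_i=1$), I may fix $w_i=1/4$ from the outset and maximize the determinant over $(x_1,x_2,t)$ alone. Feasibility in $[0,b]^2$ is automatic because $b\ge 2$.

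First I would write $\mathbf{M}(\xi)=\tfrac14 F^{\top}DF$, where $F$ is the $4\times4$ matrix with rows $\mathbf{f}(\mathbf{x}_i)^{\top}$ and $D=\operatorname{diag}(\lambda(\mathbf{x}_0),\dots,\lambda(\mathbf{x}_3))$, so that $\det\mathbf{M}(\xi)=(1/4)^4(\det F)^2\prod_i\lambda(\mathbf{x}_i)$. A short computation gives $\det F=x_1x_2t^2$, and with $\lambda(\mathbf{x})=\exp(-x_1-x_2-\rho x_1x_2)$ in the standardized case the product of intensities equals $\exp(-x_1-x_2-2t-\rho t^2)$. Hence the determinant factorizes,
\[
\det\mathbf{M}(\xi)=(1/4)^4\big(x_1^2 e^{-x_1}\big)\big(x_2^2 e^{-x_2}\big)\big(t^4 e^{-2t-\rho t^2}\big),
\]
and the three factors can be maximized independently over $[0,b]$.

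The axial factor $g(x)=x^2 e^{-x}$ satisfies $g'(x)=x(2-x)e^{-x}$, so it is strictly increasing on $[0,2]$; as $b\ge 2$ both axial factors are maximized at $x_1=x_2=2$, fixing the axial support at $(2,0)$ and $(0,2)$ in every case. The problem thus collapses to maximizing $h(t)=t^4 e^{-2t-\rho t^2}$ on $[0,b]$. Differentiating its logarithm gives $(\log h)'(t)=\tfrac4t-2-2\rho t=-2\,q(t)/t$ with $q(t)=\rho t^2+t-2$, so the sign of $h'$ is governed entirely by the quadratic $q$. For $\rho<0$ this is a downward parabola with $q(0)=-2<0$: if $\rho\le-1/8$ the discriminant $1+8\rho$ is nonpositive and $q<0$ throughout, so $h$ is strictly increasing and its maximum on $[0,b]$ lies at $t=b$; if $-1/8<\rho<0$ the two roots are positive and the smaller one is exactly $t=(\sqrt{1+8\rho}-1)/(2\rho)$, with $h$ increasing on $[0,t]$, decreasing to the larger root $t_2$, and increasing again on $[t_2,\infty)$.

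Finally I would assemble the case distinction. For $-1/8<\rho<0$ the interior point $t$ is the only local maximum of $h$, but since $h$ is eventually increasing the global maximum on $[0,b]$ equals $\max\{h(t),h(b)\}$ when $t\le b$ and $h(b)$ when $t>b$. Thus $\xi_t$ is optimal precisely when $t\le b$ and $h(t)\ge h(b)$, i.e.\ $t^4 e^{-2t-\rho t^2}\ge b^4 e^{-2b-\rho b^2}$, which is part (a); in every remaining situation ($\rho\le-1/8$, or $b<t$, or the reversed inequality) the maximizer is the endpoint $t=b$, giving part (b). The main subtlety, and the feature separating the antagonistic case $\rho<0$ from the synergetic one, is exactly that $h$ does \emph{not} attain its global maximum at the interior critical point but grows without bound as $t\to\infty$; this is what forces the explicit comparison of $h(t)$ with the boundary value $h(b)$ and the accompanying threshold condition, in place of a clean interior solution.
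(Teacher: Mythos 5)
Your proof is correct and follows essentially the same route the paper intends: the determinant factorization $\det\mathbf{M}(\xi)=(1/4)^4\,x_1^2e^{-x_1}\,x_2^2e^{-x_2}\,t^4e^{-2t-\rho t^2}$ from the proof of Lemma~\ref{l:design}, with the only new work being the sign analysis of $q(t)=\rho t^2+t-2$ for $\rho<0$ and the resulting comparison of the interior critical value $h(t)$ with the boundary value $h(b)$. This matches the paper's case distinction exactly, so nothing further is needed.
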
 

Moreover, Lemma~\ref{l:boundary} does not depend on $\rho$ and, if, additionally, $b \leq 1/|\rho|$, then the argumentation in the proof of Lemma~\ref{l:hyperbolicpath} can be adopted, where now the hyperbolic coordinate system is centered at $(1/|\rho|,1/|\rho|)$ and $v$ is negative (cf.\ the proof below).
However, the inequalities of Lemma~\ref{l:diagonal} are no longer valid, in general. 
In particular, for $\rho$ less than, but close to $-1/8$ the (deduced) sensitivity function of the design $\xi_t$ shows a local minimum at $t$ rather than a maximum which disproves the optimality of $\xi_t$ within the class of all designs on $\mathcal{X}=[0,b]^2$.
In that case an additional fifth support point is required on the diagonal, and also the weights have to be optimized.
So, in the case of an antagonistic interaction effect no general analytic solution can be expected and the numerically obtained optimal designs may become difficult to be realized as exact designs.

For even smaller design regions ($b<2$) design points on the adverse boundaries ($x_1=b$ or $x_2=b$) may occur in the optimal designs, but not in the interior besides the diagonal, both in the synergetic as well as in the antagonistic case.

It seems more promising to extend the present results to negative binomial (Poisson-Gamma) regression which is a popular generalization of Poisson regression which can cope with overdispersion as in Rodr\'{\i}guez{-}Torreblanca and Rodr\'{\i}guez{-}D\'{\i}az \cite{RR07} for one-dimensional regression or in Schmidt and Schwabe \cite{SS17} for multidimensional regression without interaction.
This will be object of further investigation.

\subsection*{Acknowledgements}
We acknowledge that the statement of Lemma \ref{l:design} was originally derived by Dörte~Schnur in her thesis \cite{koenner}.
Part of this work was supported by grants HO\,1286/6, SCHW\,531/15 and 314838170, GRK 2297 MathCoRe of the Deutsche Forschungsgemeinschaft DFG.

 \appendix
 
 \section{Proofs}

\begin{proof}[Proof of Lemma~\ref{l:design}]
For a design $\xi$ with settings $\mathbf{x}_i$ and corresponding weights $w_i$, $i=0,...,n-1$, denote by $\mathbf{F}=(\mathbf{f}(\mathbf{x}_0),...,\mathbf{f}(\mathbf{x}_{n-1}))^{\top}$ the $(n \times p)$-dimensional essential design matrix and by the $(n \times n)$-dimensional diagonal matrices $\mathbf{\Lambda}=\mathrm{diag}(\lambda(\mathbf{x}_0),...,\lambda(\mathbf{x}_{n-1}))$ and $\mathbf{W}=\mathrm{diag}(w_0,...,w_{n-1})$ the intensity and the weight matrix, respectively.
Then the information matrix can be written as
\[
\mathbf{M}(\xi) = \mathbf{F}^{\top} \mathbf{W} \mathbf{\Lambda} \mathbf{F} .
\]

For minimally supported designs the matrices $\mathbf{F}$, $\mathbf{W}$ and $\mathbf{\Lambda}$ are quadratic ($p \times p$) and the determinant of the information matrix factorizes,
\[
\det(\mathbf{M}(\xi)) = \det(\mathbf{W}) \det(\mathbf{\Lambda}) \det(\mathbf{F})^2 .
\]
As $\mathbf{W}$ and $\mathbf{\Lambda}$ are diagonal and 
\[
\mathbf{F} = \left(
\begin{array}{cccc}
1 & 0 & 0 & 0
\\
1 & x_1 & 0 & 0
\\
1 & 0 & x_2 & 0
\\
1 & t & t & t^2
\end{array}
\right)
\] 
is a triangular matrix for $\xi \in \Xi_0$, the determinants of these matrices are the products of their entries on the diagonal.
Hence,
\[
\det(\mathbf{M}(\xi)) = w_0 w_1 w_2 w_3 x_1^2 \exp( - x_1) x_2^2 \exp( - x_2) t^4 \exp( - 2t - \rho t^2)
\]
and the weights as well as the single settings can be optimized separately.
As for all minimally supported designs the optimal weights are all equal to $1/p $ which is here $1/4$.
The contribution $x_j^2 \exp( - x_j)$ of the axial points is the same as in the corresponding marginal one-dimensional Poisson regression model with $\bdgr{\beta}_j=(0,-1)^\top$ and is optimized by $x_j = 2$, $j = 1, 2$. 
Finally, $t^4 \exp( - 2t - \rho t^2)$ is maximized by $t=(\sqrt{1+8\rho}-1)/(2\rho)$ for $\rho > 0$ and  $t=2$ for $\rho = 0$.
\end{proof}

\begin{proof}[Proof of Lemma~\ref{l:hyperbolicpath}]
The main idea behind this proof is to consider the deduced sensitivity function on contours of equal intensities.
For this we reparametrize the design region and use shifted and rescaled hyperbolic coordinates,
\[
x_1 = (v \exp(u) - 1)/\rho \qquad \textrm{ and } \qquad  x_2 = (v \exp(-u) - 1)/\rho ,
\]
where $v=\sqrt{(1+\rho x_1)(1+\rho x_2)}$ is the (shifted and scaled) hyperbolic distance and $u=\log(\sqrt{(1+\rho x_1)/(1+\rho x_2)})$ is the (shifted and scaled) hyperbolic angle in the case $\rho>0$.
The design region $\mathcal{X}=[0,\infty)^2$ is covered by $v \geq 1$ and $ |u| \leq \log(v) $.

With these coordinates, fixing  $v > 1$ returns a path parametrized in $u$ which intersects the diagonal at $u = 0$.
On each of these paths the intensity function $\lambda(\mathbf{x})$ is constant.
\begin{figure}[ht]
	\includegraphics[scale=.6]{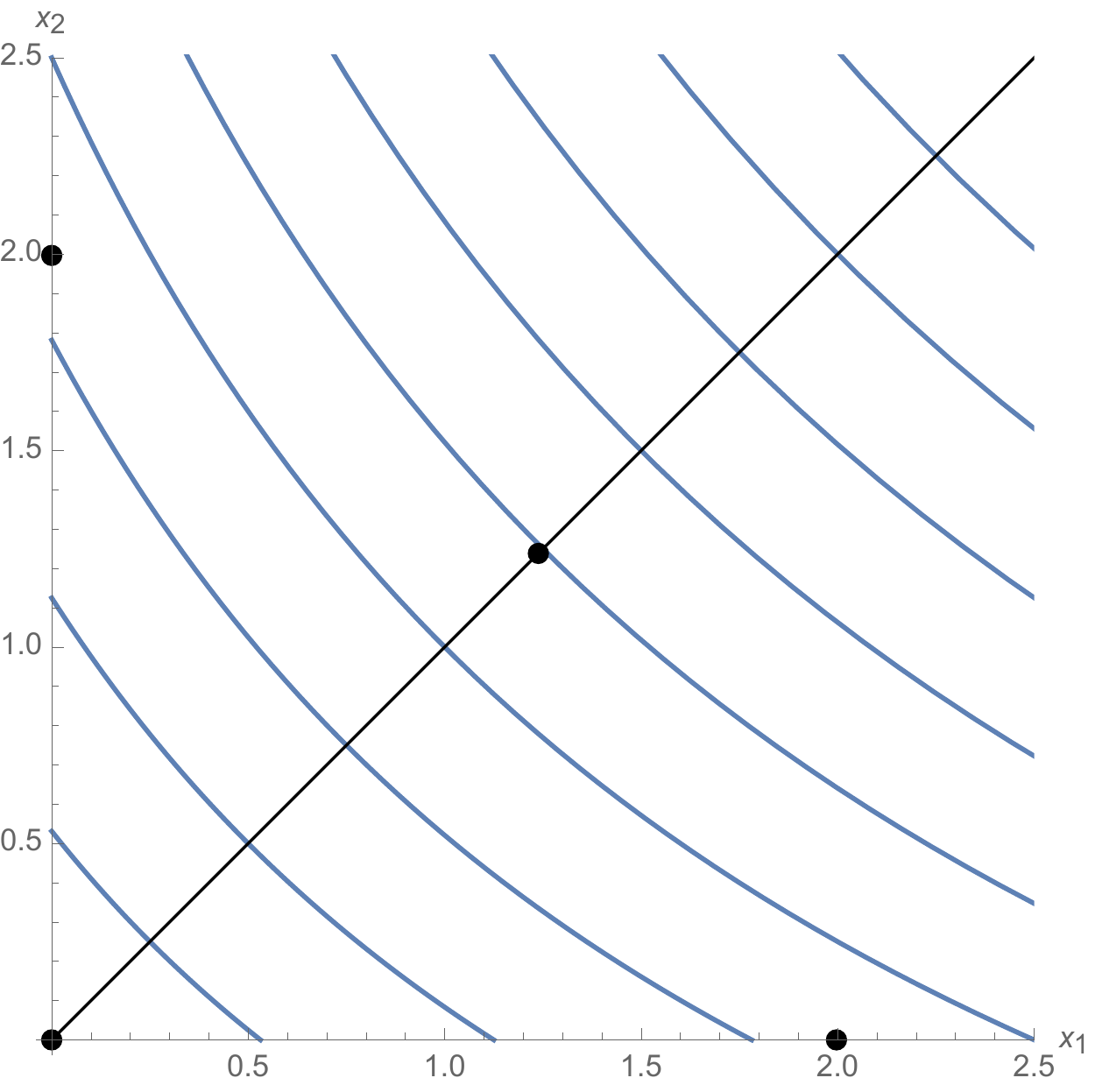}~	\includegraphics[scale=.6]{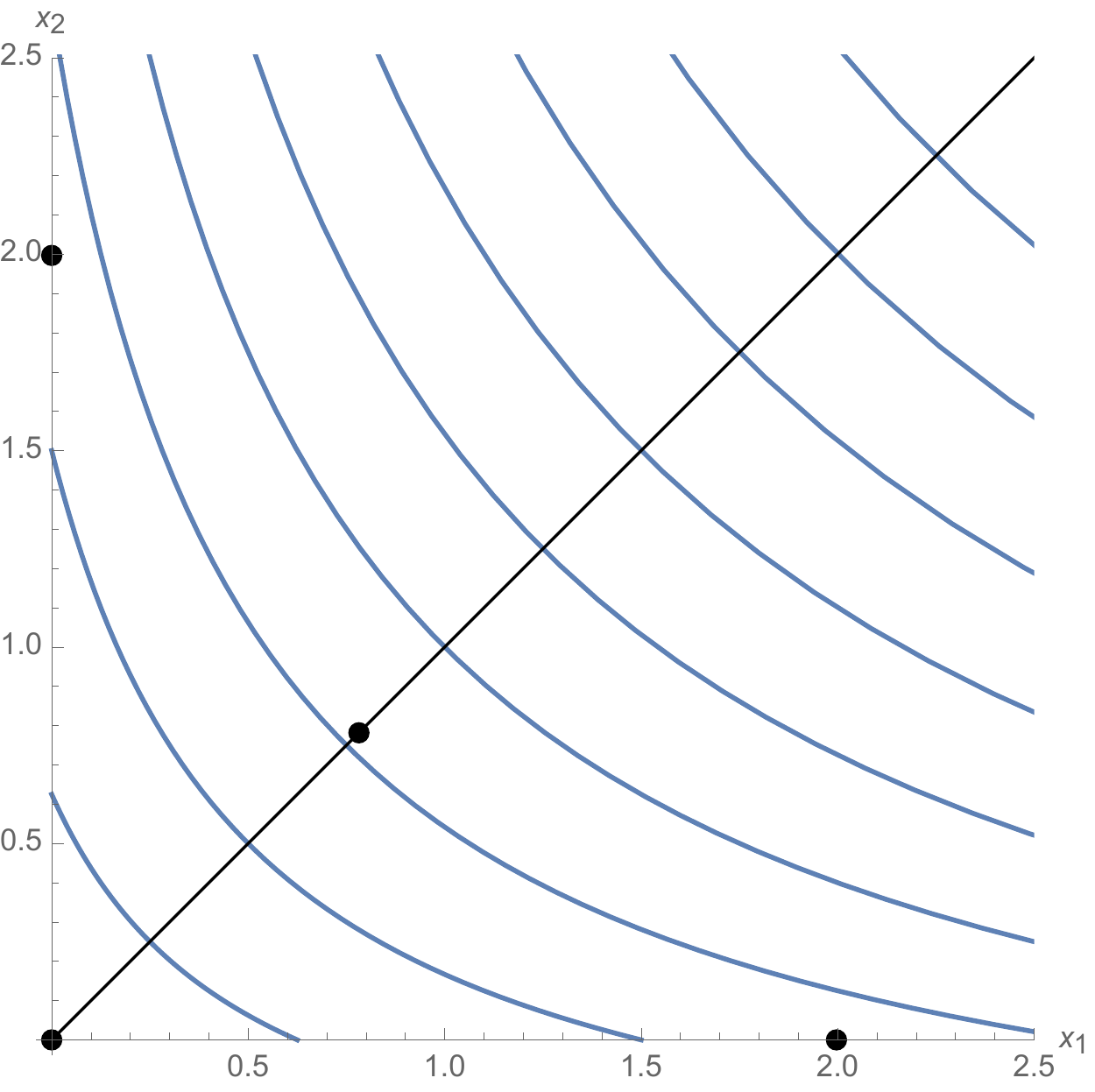}
	\caption{Lines of constant intensity for $ \rho=1/2 $ and $ \rho=2 $ with optimal design points}
\end{figure}

Because $\xi_t$ is invariant under permutation of $x_1$ and $x_2$, i.\,e.~sign change of $u$, the deduced sensitivity function $d(\mathbf{x};\xi_t)$ is symmetric in $u$, and we only have to consider the non-negative branch, $0 \leq u \leq \log(v)$. 
Using $\cosh(2u) = 2\cosh^2(u) - 1$, we observe that $d(\mathbf{x};\xi_t)$ is a quadratic polynomial in $\cosh(u) = (\exp(u) + \exp(-u))/2$ on each path.
Further, by the invariance of $\xi_t$, the information matrix and, hence, its inverse is invariant with respect to simultaneous exchange of the second and third columns and rows, respectively.
The leading coefficient of the quadratic polynomial can be written as $c(v) \mathbf{a}^{\top} \mathbf{M}(\xi)^{-1} \mathbf{a}$, where $\mathbf{a}=(0,-\rho,0,1)^\top$ and $c(v)$ is a positive constant depending on $v$. 
Since $\mathbf{M}(\xi)^{-1}$ is positive-definite, the leading coefficient is positive.
Now, any quadratic polynomial with positive leading coefficient attains its maximum over an interval on the boundary. 
This continues to hold if we compose the polynomial with a strictly monotonic function like $\cosh(u)$ on $[0,\log(v)]$.
Hence, on each path the maximum occurs at the diagonal ($u=0$, i.\,e.\ $x_1=x_2$) or on the boundary ($|u|=\log(v)$, i.\,e.\ $x_1=0$ or $x_2=0$). 
As the paths cover the whole design region, the statement of the Lemma follows for $\rho>0$.

In the case $\rho=0$ the contours of equal intensities degenerate to straight lines, where $x_1+x_2$ is constant.
Then the design region can be reparametrized by $x_1 = v+u$ and $x_2 = v - u$, where $v=(x_1 + x_2)/2 \geq 0$ is the (scaled directional $\ell_1$) distance from the origin and $u=(x_1 - x_2)/2$ is the (scaled $\ell_1$) distance from the diagonal, $|u| \leq v$.
Using similar arguments as for the case $\rho>0$ we can show that the sensitivity function restricted to each of these line segments for $v$ fixed is a symmetric polynomial in $u$ of degree $4$ with positive leading term.
Hence, also in the case $\rho=0$ the maximum of the sensitivity function can only be attained on the diagonal ($u=0$) or on the boundary ($|u|=v$) which completes the proof.
\end{proof}

\begin{proof}[Proof of Lemma~\ref{l:boundary}]
With the notation in the Proof of Lemma~\ref{l:design} the deduced sensitivity function can be written as
\begin{align}
\label{eq:sens_xt}
d(\mathbf{x};\xi_t) = \mathbf{f}(\mathbf{x})^{\top} \mathbf{F}^{-1} \mathbf{\Lambda}^{-1} (\mathbf{F}^{-1})^{\top} \mathbf{f}(\mathbf{x}) / p - 1 / \lambda(\mathbf{x}) ,
\end{align}
where
\[
\mathbf{F}^{-1} = \left(
\begin{array}{cccc}
1 & 0 & 0 & 0
\\
-1/2 & 1/2 & 0 & 0
\\
-1/2 & 0 & 1/2 & 0
\\
(t-1)/t^2 & -1/(2t) & -1/(2t) & 1/t^2
\end{array}
\right) ,
\] 
and similarly for the deduced sensitivity function $d_1(x;\xi_{-1}^*)$ of the locally $D$-optimal design $\xi_{-1}^*$ in the one-dimensional marginal model when $\bdgr{\beta}_1=(0,-1)^\top$.
For settings $\mathbf{x}=(x_1,0)$ we then obtain $d(\mathbf{x};\xi_t) = d_1(x_1;\xi_{-1}^*)$ by the relation between the quantities and matrices in both models and their special structure.
As $\xi_{-1}^*$ is $D$-optimal in the marginal model, its deduced sensitivity $d_1$ is bounded by zero by the equivalence theorem.
Hence, we obtain $d((x_1,0);\xi_t) \leq 0$ for all $x_1 \geq 0$.

For reasons of symmetry we also get $d((0,x_2);\xi_t) \leq 0$ for all $x_2 \geq 0$ which completes the proof.
\end{proof}

\begin{proof}[Proof of Lemma~\ref{l:diagonal}]
First note that the relation between $\rho$ and $t=(\sqrt{1+8\rho}-1)/(2\rho)$ is one-to-one such that conversely $\rho = (2-t)/t^2$.
Then, with the transformation $q=x/t$, the inequality to show in Lemma~\ref{l:diagonal} can be equivalently reformulated to
\begin{multline}
	\label{EQUI2}
	d(\mathbf{x};\xi_t) =\\
	 (q-1)^2 (q (t-1)-1)^2 + \frac{1}{2} \exp(2) t^2 (q-1)^2 q^2 + \exp(t+2) q^4 - \exp(2tq+(2-t)q^2) \leq 0 
\end{multline}
by using (\ref{eq:sens_xt}).
To prove the Lemma it is then sufficient to show that the inequality (\ref{EQUI2}) holds for all $0 \leq t \leq 2$ and all $q \geq 0$.

The idea behind the proof is to split the above function into a polynomial 
\[
		h_0(q,t)=\frac{1}{2} \exp(2) t^2 (q-1)^2 q^2+(q-1)^2 (q (t-1)-1)^2
\]
in $t$ and $q$ and a function 
\[
		 h_1(q,t) = \exp(2 q t + (2 - t) q^2) - \exp(t + 2) q^4
\]
involving the exponential terms such that  $d(\mathbf{x};\xi_t) = h_0(q,t) - h_1(q,t)$ and to find a suitable separating function $h_2(q,t)$ such that the inequalities $h_0(q,t) \leq h_2(q,t)$ and $h_2(q,t) \leq h_1(q,t)$ are easier to handle, where essentially methods for polynomials can be used for the former inequality while in the latter properties of exponential functions can be employed.

This function $h_2(q,t)$ will be defined piecewise in $q$ by
 \begin{equation*}
 h_2(q,t) = \left\{ 
\begin{array}{ll}
1 & \textrm{ for } q \leq q_0
\\
 \exp(t+2) (q-1)^2 q^2 & \textrm{ for } q > q_0
\end{array}  ,
 \right. 
 \end{equation*}
 where $q_0 = 3/5$, and the proof will be performed case-by-case. Figure \ref{fig:hoh1h2} visualizes this approach for selected values of $ t $.

\begin{figure}
	\includegraphics[scale=.6]{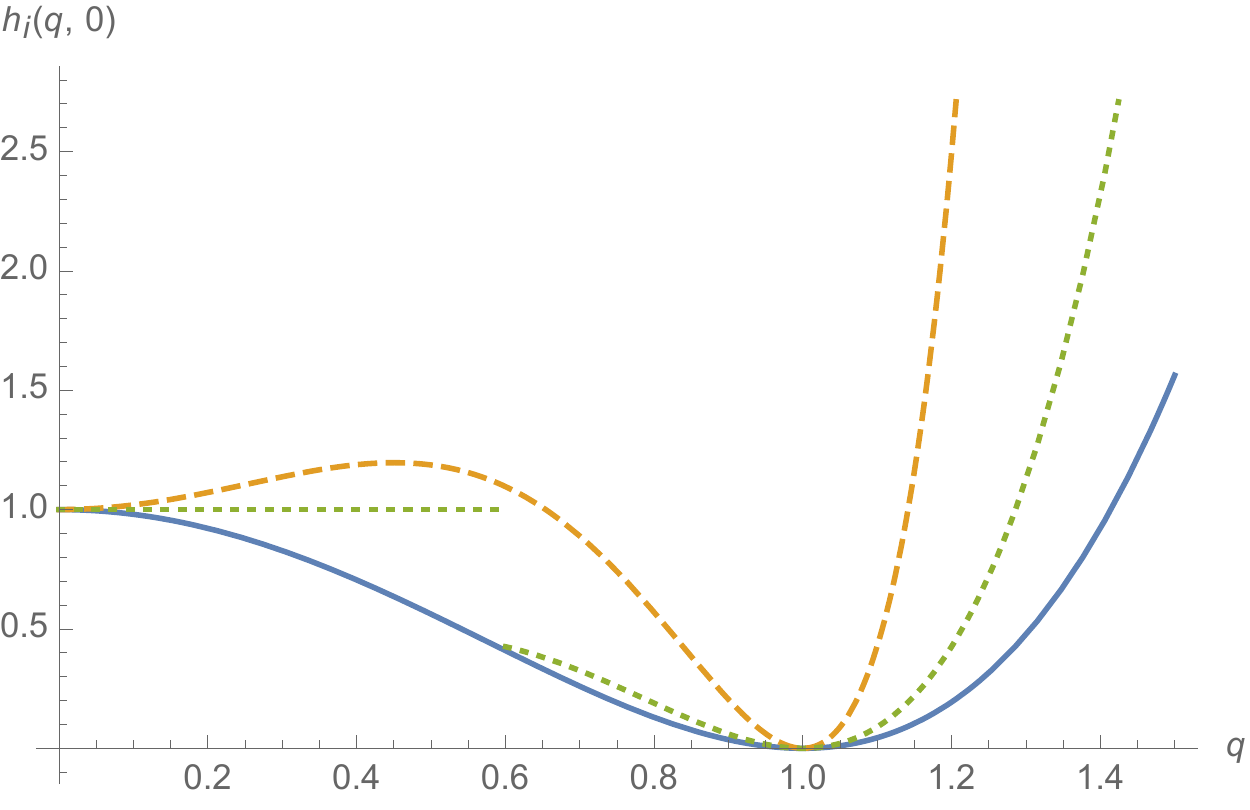}~\includegraphics[scale=.6]{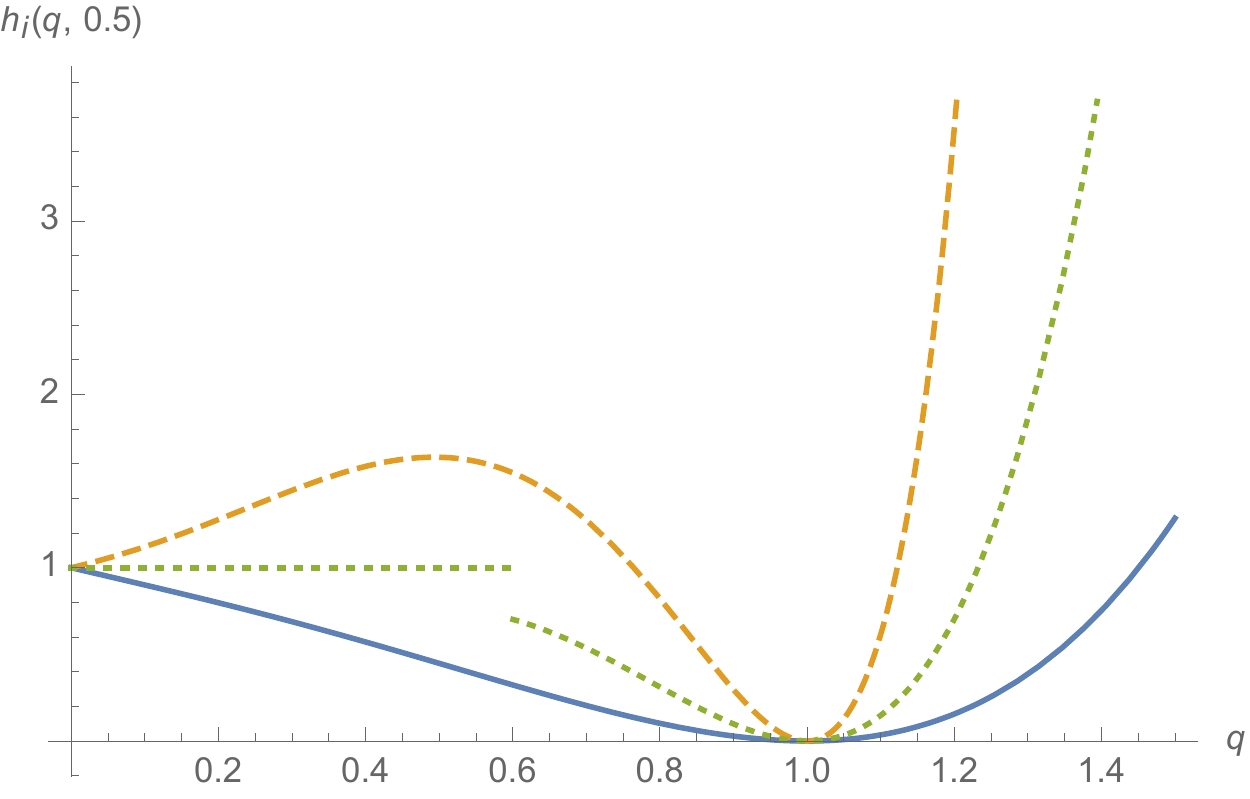}\\ 
	\includegraphics[scale=.6]{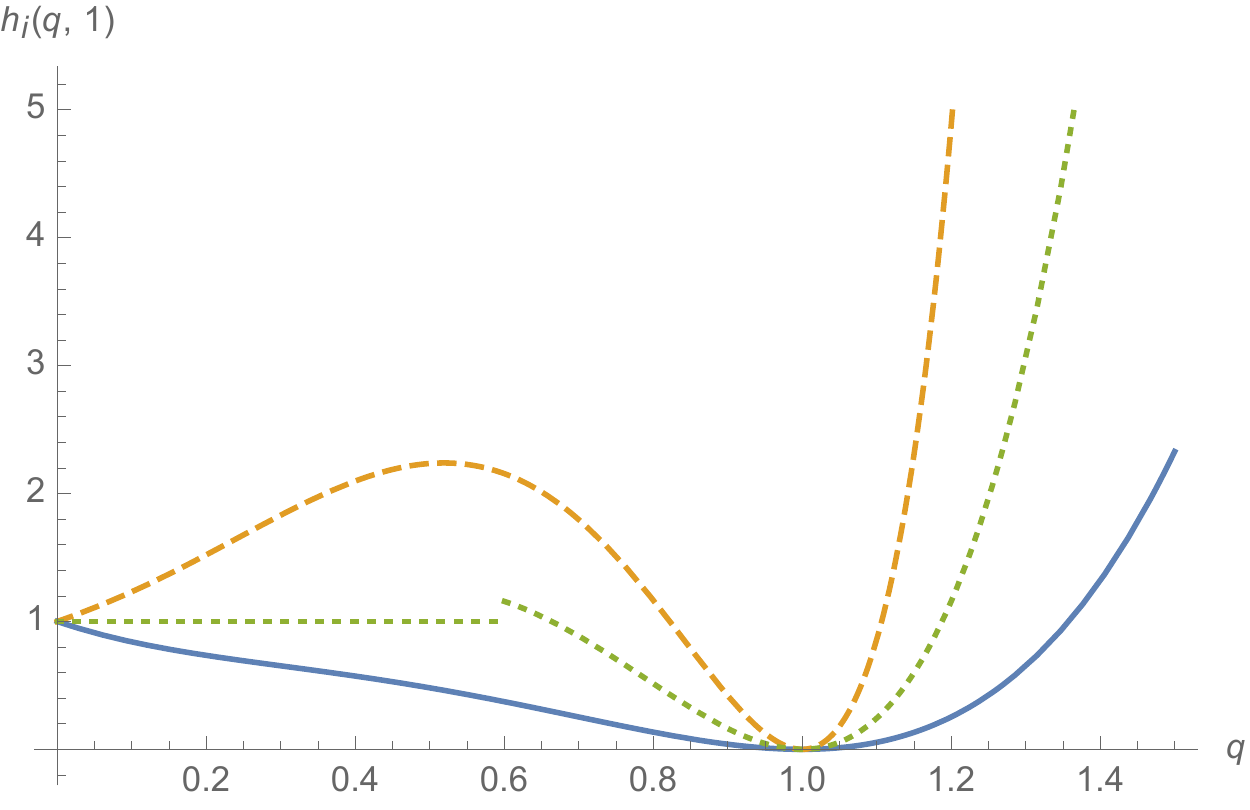}~\includegraphics[scale=.6]{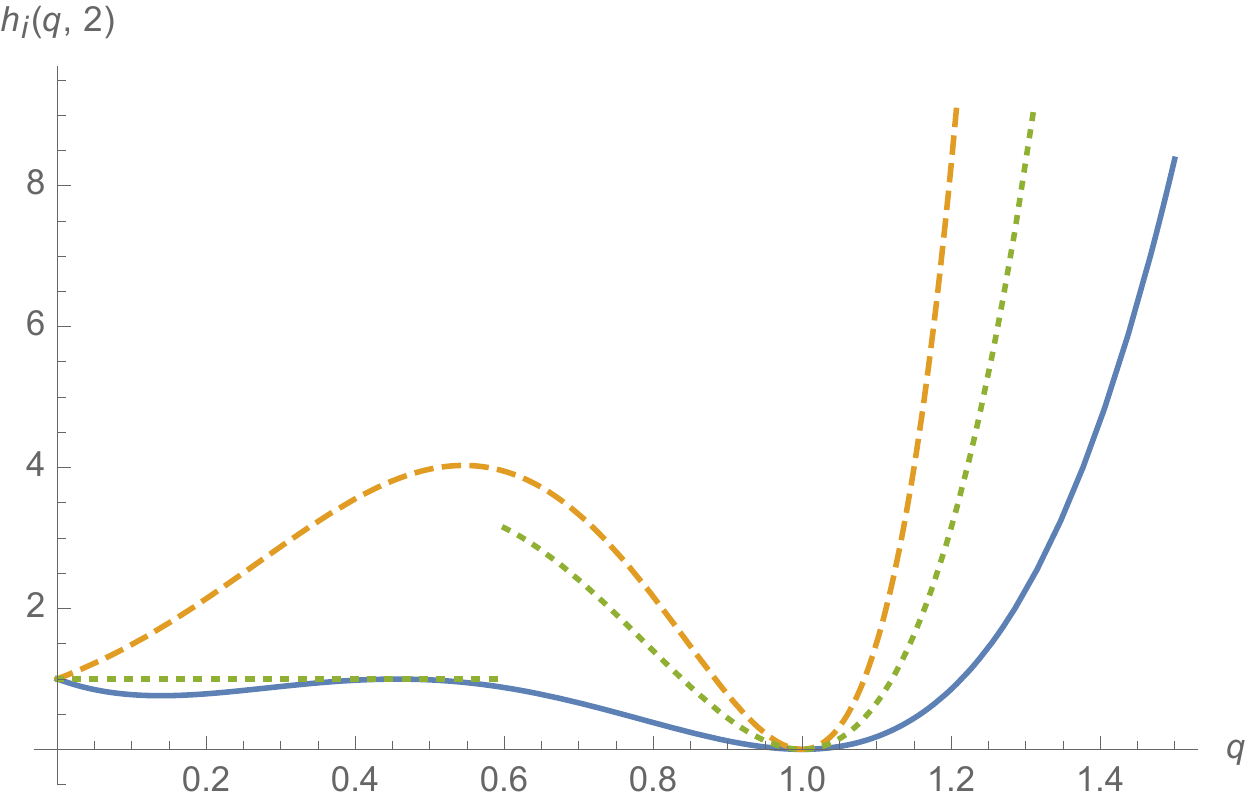}
	\caption{The functions $ h_0(q,t) $ (blue), $ h_1(q,t) $ (orange) and $ h_2(q,t) $ (green) for $ t= 0,1/2,1 $ and $ 2 $}\label{fig:hoh1h2}
\end{figure}

We start with the case $q \leq q_0$: 
The function $h_0(q,t)$  is a quadratic polynomial in $t$ with positive leading term.
Therefore its maximum over $0 \leq t \leq 2$ is attained at the end-points $t=0$ or $t=2$ of the interval.
Now, for $t=0$ we obtain
\[
	h_0(q,0) = (1 - q^2)^2 \leq 1
\]
for all $q \leq q_0$.

For $t=2$ 
\[
	h_0(q,2) =  (1 - q)^2 (2 \exp(2) q^2 + (1 - q)^2)
\]
 is a polynomial of degree $4$ in $q$ with positive leading term, $h_0(0,2) = 1$ and $h_0(1,2) = 0$.
 The polynomial has a local maximum 
 \begin{align*}
 	h_0(q_1,2) &= \frac{\exp(4) (3 \exp(1) - \sqrt{\exp(2) - 4})^2 (\exp(2) + 2 + \sqrt{\exp(4) - 4\exp(2)})}{8 (2 \exp(2) + 1)^3}\\
 	 &\approx 0.997
 \end{align*}
  at $q_1 = ( \exp(2) + 2 + \sqrt{\exp(4) - 4 \exp(2)})/(4 \exp(2) + 2) \approx 0.456.$
 This implies that $h_0(q,t) \leq 1$ for all $q \leq q_0$ and all $t \in [0,2]$.

Next we consider $h_1(q,t)$ as a function of $t$.
Its partial derivative with respect to $t$ is given by
\begin{equation}
\label{eq:partial_h1}  
\frac{\partial}{\partial t} h_1(q,t) = (2 - q) q \exp(q (2 t + (2 - t) q)) - q^4 \exp(t + 2) .
\end{equation}
If we compare the exponential terms, we see that
\begin{equation}
\label{eq:exponentlowbound}  
q (2 t +  (2 - t)q) - (t + 2) = - t (q - 1)^2 + 2 (q^2 - 1) \geq 4 (q - 1)
\end{equation}
 for all $ 0 \leq t \leq 2 $ uniformly in $q$.
Hence, the partial derivative (\ref{eq:partial_h1}) is non-negative if
\begin{align}
\label{eq:g1der2}
	q^{-3} (2 - q) \exp(4 (q - 1)) & \geq 1 .
\end{align}
To see this we notice
\[
 \frac{\partial}{\partial q} q^{-3} (2 - q) \exp(4 (q - 1)) = - 2 q^{-4}  (2 q^2 - 5 q + 3) \exp(4 (q - 1)) \leq 0
 \]
for $q \leq 1$ such that the expression on the left hand side of (\ref{eq:g1der2}) attains its minimum at $q =1$, where it is equal to $1$.
Combining the above results we obtain that $h_1(q,t)$ attains its minimum at $t = 0$ for all $q \leq 1$.  
It remains to show that  $h_1(q,0) = \exp(2 q^2) - \exp(2) q^4 \geq 1$ for all $q \leq q_0$. 
For this we check the derivative  
\[
\frac{\partial}{\partial q} h_1(q,0) = 4 q (\exp(2 q^2) - \exp(2) q^2) 
\]
with respect to $q$ which is positive for $0 < q < q_2 $ and negative for $ q_2 < q \leq q_0 $. where $q_2 \approx 0.451 $.
Hence, evaluating $h_1(q,0)$ a the end-points of the relevant interval, $h_1(0,0) = 1$ and $h_1(q_0,0) \approx 1.097 $, we get $h_1(q,0) \geq 1$ which finally implies $h_0(q,t) \leq 1 \leq h_1(q,t)$ for all $q \leq q_0$ and all $0 \leq t \leq 2$.

For the case $q > q_0$ the condition $h_0(q,t) \leq h_2(q,t)$  is equivalent to
\begin{equation}
\label{eq:taylor1}
(q (t - 1) - 1)^2 \leq  q^2 \exp(2)  (\exp(t) - t^2/2).
\end{equation}
By the exponential series expansion, $ \exp(t) \geq 1 + t + t^2/2$ for $ t\ge 0 $, the right hand side is bounded from below by $(t + 1)  q^2 \exp(2)$, and for (\ref{eq:taylor1}) to hold it is sufficient to show
\begin{align}
\label{eq:taylor2}
(\exp(2) (t + 1) - (t - 1)^2) q^2  + 2 (t - 1) q - 1 & \geq 0 .
\end{align}
The derivative of this expression with respect to $ q $ equals
\[
2 (\exp(2) (t + 1) - (t - 1)^2) q + 2 (t - 1) \geq  \exp(2) (t + 1) -t^2+4t-3 \geq 0
\]
for $q \geq 1/2$ and all $0 \leq t \leq 2$.
Hence, the expression in (\ref{eq:taylor2}) itself is bounded from below by its value at $q_0 = 3/5$, which is approximately 0.1001.

This establishes $h_0(q,t) \leq h_2(q,t)$ for all $q > q_0$ and all $0 \leq t \leq 2$.

Finally, the condition $h_2(q,t) \leq h_1(q,t)$  is equivalent to
\[
(1 - q)^2 q^2 + q^4 \leq \exp(q (2 t + (2 - t) q) - (t + 2)) .
\]
Again, by $q (2 t+ (2-t)q)-(t+2) \geq 4(q-1)$ for all $ 0 < t < 2 $, see (\ref{eq:exponentlowbound}), it is sufficient to show
\begin{align}
\label{eq:minorante}
((1 - q)^2 q^2 + q^4) \exp(4(1 - q)) \leq 1  
\end{align}
for all $ q \geq 0 $.
The derivative of this expression equals
\[
2 (1 - 2 q)^2 (1 - q) q \exp(4 (1 - q)) .
\]
Hence, for $q \geq 0$ the expression in (\ref{eq:minorante}) attains its maximum at $ q=1 $, where it is equal to $1$. 
This implies $h_2(q,t) \leq h_1(q,t)$ for all $q > q_0$ and all $0 \leq t \leq 2$ which completes the proof.
\end{proof}

\begin{proof}[Proof of Theorem~\ref{thm:k-dim_1st-order}]
Here we only give a sketch of the proof.
As in the Proof of Lemma~\ref{l:hyperbolicpath} we see that the paths of equal intensity constitute hyper-planes intersecting the design region at equilateral simplices.
On each straight line within these simplices the sensitivity function is a polynomial of degree four with positive leading term.
Hence, following the idea of the proofs in Farrell et al.\ \cite{FKW1967} we can conclude by symmetry considerations with respect to permutation of the entries in $\mathbf{x}$ we can conclude that the sensitivity function may attain a maximum in the interior of the design region only at the diagonal, where all entries in $\mathbf{x}$ are equal ($x_1=x_2=...=x_k=x$) and in the relative interior of each $j$-dimensional face of the design region on the respective diagonal, where all the $j$ non-zero entries of $\mathbf{x}$ are equal to some $x$, $2 \leq j \leq k$.

Similar to the Proof of Lemma~\ref{l:boundary} on each face the deduced sensitivity function is equal to its counterpart for the $D$-optimal design in the two-dimensional marginal model on that face and is, thus, bounded by $0$.

Finally, to derive the deduced sensitivity function on the diagonals we specify the essential design matrix $\mathbf{F}$ and its inverse
\[
\mathbf{F} = \left(
\begin{array}{ccc}
1 & \mathbf{0} & \mathbf{0} 
\\
\mathbf{1}_k & \mathbf{I}_k & \mathbf{0} 
\\
 \mathbf{1}_{C(k,2)} & \mathbf{S}_2 & \mathbf{I}_{C(k,2)}
\end{array}
\right) \mathbf{A}
\qquad \textrm{ and } \qquad
\mathbf{F}^{-1} = \mathbf{A}^{-1} \left(
\begin{array}{ccc}
1 & \mathbf{0} & \mathbf{0} 
\\
-\mathbf{1}_k & \mathbf{I}_k & \mathbf{0} 
\\
 \mathbf{1}_{C(k,2)} & -\mathbf{S}_2 & \mathbf{I}_{C(k,2)}
\end{array}
\right) ,
\] 
where $\mathbf{A}=\mathrm{diag}(1, 2\,\mathbf{1}_k^\top, 4\, \mathbf{1}_{C(k,2)}^\top)$ is a diagonal matrix related to the product of the non-zero coordinates of the design points, $\mathbf{1}_m$ is a $m$-dimensional vector with all entries equal to $1$,  $\mathbf{I}_m$ is the $m \times m$ identity matrix, $C(m,n)$ denotes binomial coefficient $\binom{m}{n}$, and $\mathbf{S}_2$ is the incidence matrix of a balanced incomplete block design (BIBD) for $k$ varieties and all $C(k,2)$ blocks of size $2$.
Then by \eqref{eq:sens_xt} the deduced sensitivity function equals
\[
(C(j,2) q^2 - j q + 1)^2 + j \exp(2) ((j-1) q^2 - q)^2 + C(j,2) \exp(4) q^4 - \exp(2 j q) 
\]
on the diagonals of all $j$-dimensional faces, $j<k$, and the interior diagonal for $j=k$, where $q=x/2$ as in the Proof of Lemma~\ref{l:diagonal}.
By using \Mathematica and a power series expansion of order $5$ for the term $\exp(2 k q)$ the above expression can be seen not to exceed $0$ for all $q \geq 0$ which establishes the local $D$-optimality in view of the equivalence theorem.
\end{proof}

\begin{proof}[Proof of Theorem~\ref{thm:k-dim_2nd-order}]
The proof goes along the lines of the Proof of Theorem~\ref{thm:k-dim_1st-order}.
The essential design matrix $\mathbf{F}$ and its inverse are specified as
\begin{align*}
\mathbf{F} &= \left(
\begin{array}{cccc}
1 & \mathbf{0} & \mathbf{0}  & \mathbf{0} 
\\
\mathbf{1}_k & \mathbf{I}_k & \mathbf{0}  & \mathbf{0} 
\\
 \mathbf{1}_{C(k,2)}&\mathbf{S}_2 & \mathbf{I}_{C(k,2)}  & \mathbf{0} 
\\
 \mathbf{1}_{C(k,3)} & \mathbf{S}_3 & \mathbf{S}_{23} & \mathbf{I}_{C(k,3)} 
\end{array}
\right) \mathbf{A},\\
\mathbf{F}^{-1} &= \mathbf{A}^{-1} \left(
\begin{array}{cccc}
1 & \mathbf{0} & \mathbf{0} & \mathbf{0} 
\\
-\mathbf{1}_k & \mathbf{I}_k & \mathbf{0} & \mathbf{0} 
\\
 \mathbf{1}_{C(k,2)} & -\mathbf{S}_2 & \mathbf{I}_{C(k,2)} & \mathbf{0} 
\\
-\mathbf{1}_{C(k,3)} & \mathbf{S}_3 & -\mathbf{S}_{23} & \mathbf{I}_{C(k,3)} 
\end{array}
\right) ,
\end{align*}
where now $\mathbf{A}=\mathrm{diag}(1, 2\, \mathbf{1}_k^\top, 4\, \mathbf{1}_{C(k,2)}^\top, 8\, \mathbf{1}_{C(k,3)}^\top)$, $\mathbf{S}_3$ is the incidence matrix of a BIBD for $k$ varieties and all $C(k,3)$ blocks of size $3$, and $\mathbf{S}_{23}$ is the (generalized) $C(k,3) \times C(k,2)$ incidence matrix which relates all blocks of size $2$ to those blocks of size $3$ ln which their components are included.
Then the deduced sensitivity function equals
\begin{multline*}
	(C(j,3) q^3 - C(j,2) q^2 + j q - 1)^2+j \exp(2) ((C(j,2) - j + 1) q^3 - (j - 1) q^2 + q)^2\\ + C(j,2) \exp(4) ((j - 2) q^3 - q^2)^2+C(j,3) \exp(6) q^6  - \exp(2 j q)
\end{multline*}
on the diagonals, where $q=x/2$.
By using \Mathematica and a power series expansion of order $9$ for the term $\exp(2 k q)$ the above expression can be seen not to exceed $0$ for all $q \geq 0$ which establishes the local $D$-optimality.
\end{proof}

\bibliographystyle{plain}
\bibliography{bibliography}

\end{document}